\documentclass[oneside,english]{amsart}
\usepackage[T1]{fontenc}
\usepackage[latin9]{inputenc}
\usepackage{geometry}
\geometry{verbose,tmargin=2cm,bmargin=2cm,lmargin=2.5cm,rmargin=2.5cm}
\usepackage{amsthm}
\usepackage{amssymb}

\makeatletter
\numberwithin{equation}{section}
\numberwithin{figure}{section}
  \theoremstyle{plain}
  \newtheorem*{thm*}{\protect\theoremname}
\theoremstyle{plain}
\newtheorem{thm}{\protect\theoremname}
  \theoremstyle{plain}
  \newtheorem{prop}[thm]{\protect\propositionname}
  \theoremstyle{definition}
  \newtheorem{example}[thm]{\protect\examplename}
  \theoremstyle{remark}
  \newtheorem{rem}[thm]{\protect\remarkname}
  \theoremstyle{plain}
  \newtheorem{lem}[thm]{\protect\lemmaname}
  \theoremstyle{plain}
  \newtheorem{cor}[thm]{\protect\corollaryname}

 \usepackage{xypic}

\makeatother

\usepackage{babel}
  \providecommand{\corollaryname}{Corollary}
  \providecommand{\examplename}{Example}
  \providecommand{\lemmaname}{Lemma}
  \providecommand{\propositionname}{Proposition}
  \providecommand{\remarkname}{Remark}
  \providecommand{\theoremname}{Theorem}
\providecommand{\theoremname}{Theorem}

\begin{document}

\title{Families of exotic affine $3$-spheres}

\author{Adrien Dubouloz}

\address{IMB UMR5584, CNRS, Univ. Bourgogne Franche-Comté, F-21000 Dijon,
France.}

\email{adrien.dubouloz@u-bourgogne.fr}

\thanks{This research was partialy funded by ANR Grant \textquotedbl{}BirPol\textquotedbl{}
ANR-11-JS01-004-01. }

\subjclass[2000]{14R05, 14R25, 14J10.}
\begin{abstract}
We construct algebraic families of exotic affine $3$-spheres, that
is, smooth affine threefolds diffeomorphic to a non-degenerate smooth
complex affine quadric of dimension $3$ but non algebraically isomorphic
to it. We show in particular that for every smooth topologically contractible
affine surface $S$ with trivial automorphism group, there exists
a canonical smooth family of pairwise non isomorphic exotic affine
$3$-spheres parametrized by the closed points of $S$. 
\end{abstract}

\maketitle

\section*{Introduction}

An\emph{ exotic affine $n$-sphere }is a smooth complex affine variety
diffeomorphic to the standard affine algebraic $n$-sphere $\mathbb{S}^{n}=\left\{ x_{1}^{2}+\cdots+x_{n+1}^{2}=1\right\} $
in $\mathbb{A}_{\mathbb{C}}^{n+1}$ but not algebraically isomorphic
to it. In dimension $n=1$ or $2$, the algebraic structure on a smooth
affine variety diffeomorphic to $\mathbb{S}^{n}$ is actually uniquely
determined by its topology: $\mathbb{S}^{1}\simeq\mathbb{A}_{\mathbb{C}}^{1}\setminus\left\{ 0\right\} $
is the unique smooth affine curve $C$ with $H_{1}\left(C;\mathbb{Z}\right)\simeq\mathbb{Z}$,
and a smooth affine surface $S$ is algebraically isomorphic to $\mathbb{S}^{2}$
if and only if it has the same homology type and the same homotopy
type at infinity as $\mathbb{S}^{2}$ (see \cite[Theorem 3.3]{DuFin}).
Examples of smooth affine threefolds with the same homology type as
$\mathbb{S}^{3}$ were first constructed by D. Finston and S. Maubach
in the form of total spaces of certain locally trivial $\mathbb{A}^{1}$-bundles
over the smooth locus $S_{p,q,r}^{*}=S_{p,q,r}\setminus\{(0,0,0)\}$
of a Brieskorn surface 
\[
S_{p,q,r}=\left\{ x^{p}+y^{q}+z^{r}=0\right\} \subset\mathbb{A}^{3},\quad1/p+1/q+1/r<1.
\]
These threefolds are all diffeomorphic to each others, admitting the
corresponding Brieskorn homology sphere $\Sigma\left(p,q,r\right)$
as a strong deformation retract. The main result of \cite{Finston2008}
asserts that their isomorphy types as abstract varieties are uniquely
determined by their isomorphy classes as $\mathbb{A}^{1}$-bundles
over $S_{p,q,r}^{*}$, up to composition by automorphisms of $S_{p,q,r}^{*}$.
In particular, their construction gives rise to discrete families
of exotic ``homology'' affine $3$-spheres (see Example \ref{exa:FinMau }).

Natural candidates of exotic affine $3$-spheres are the total spaces
of nontrivial $\mathbb{A}^{1}$-bundles over the punctured affine
plane $\mathbb{A}_{*}^{2}$. Indeed, noting that $\mathbb{S}^{3}\simeq{\rm SL}_{2}\left(\mathbb{C}\right)=\left\{ xv-yu=1\right\} \subset\mathbb{A}_{*}^{2}\times\mathbb{A}^{2}$,
where $\mathbb{A}_{*}^{2}={\rm Spec}\left(\mathbb{C}\left[x,y\right]\right)\setminus\{(0,0)\}$,
one checks more generally that all the varieties

\[
X_{m,n}=\left\{ x^{m}v-y^{n}u=1\right\} \subset\mathbb{A}_{*}^{2}\times\mathbb{A}^{2},\quad m+n\geq2,
\]
are diffeomorphic to the trivial $\mathbb{R}^{2}$-bundle $\mathbb{R}^{2}\times\mathbb{A}_{*}^{2}$
over $\mathbb{A}_{*}^{2}$ when equipped with the euclidean topology.
It was established in \cite{DuFin} that for $m+n>2$, $X_{m,n}$
is indeed an exotic affine $3$-sphere. But the classification of
all threefolds $X_{m,n}$, $m+n>2$, up to isomorphism remains highly
elusive, in particular, the techniques developed in \emph{loc. cit.}
do not allow to distinguish the $X_{m,n}$, $m+n>2$, from each others.
As a consequence, all these threefolds could very well be all isomorphic,
forming thus a unique class of exotic affine $3$-spheres. 

In this article, we build up on these ideas to construct infinitely
many pairwise non isomorphic new exotic affine $3$-spheres, arising
as total spaces of $\mathbb{A}^{1}$-bundles over $1$-punctured smooth
topologically contractible algebraic surfaces $S$. In particular,
we obtain the following: 
\begin{thm*}
For every smooth topologically contractible complex algebraic surface
$S$, there exists a canonical smooth family $h:\mathcal{V}\rightarrow S$
of affine threefolds with the following properties: 

a) The closed fibers are all diffeomorphic to $\mathbb{S}^{3}$,

b) The fibers of $h:\mathcal{V}\rightarrow S$ over two closed points
$p$ and $p'$ of $S$ are isomorphic if and only if $p$ and $p'$
belong to the same orbit of the action of $\mathrm{Aut}(S)$ on $S$. 
\end{thm*}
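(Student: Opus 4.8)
The plan is to realize $h\colon\mathcal{V}\to S$ as the restriction to $S$ minus a point of one canonical $\mathbb{G}_{a}$-torsor living over $(S\times S)\setminus\Delta$, to deduce a) by a purely topological argument since $\mathbb{G}_{a}$-torsors are euclidean-topologically trivial, and to deduce b) by recovering the pair $(S,p)$ from the abstract variety $\mathcal{V}_p$ through its Makar--Limanov invariant. Concretely, I would set $W:=(S\times S)\setminus\Delta$ with $q\colon W\to S$ the second projection, whose fibre over a closed point $p$ is $S\setminus\{p\}$. Recall that a smooth topologically contractible complex algebraic surface $S$ is affine with $\mathrm{Pic}(S)=0$ and $\Gamma(S,\mathcal{O}_S)^{*}=\mathbb{C}^{*}$, so by the K\"unneth formula and the fact that $\Delta$ has codimension $2$ in the smooth fourfold $S\times S$ one gets $\mathrm{Pic}(W)=0$; hence every Zariski locally trivial $\mathbb{A}^{1}$-bundle over $W$ is a $\mathbb{G}_{a}$-torsor, and such torsors are classified by $H^{1}(W,\mathcal{O}_W)\cong H^{2}_{\Delta}(S\times S,\mathcal{O})$, the isomorphism coming from the local cohomology exact sequence since $S\times S$ is affine. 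The module $H^{2}_{\Delta}(S\times S,\mathcal{O})$ has a distinguished element: the generator, well defined up to $\mathbb{C}^{*}$, of its socle, a line bundle on $\Delta$ isomorphic to $\det N_{\Delta/S\times S}\cong\omega_{S}^{-1}\cong\mathcal{O}_S$ --- in local equations $f_{1},f_{2}$ of $\Delta$ it is the \v{C}ech cocycle $[1/(f_{1}f_{2})]$. I would let $\mathcal{V}\to W$ be the associated torsor and $h:=q\circ(\mathcal{V}\to W)\colon\mathcal{V}\to S$; the fibre $\mathcal{V}_p:=h^{-1}(p)$ is then the $\mathbb{G}_{a}$-torsor $\pi_p\colon\mathcal{V}_p\to S\setminus\{p\}$ whose class $\xi_p\in H^{1}(S\setminus\{p\},\mathcal{O})\cong H^{2}_{\{p\}}(S,\mathcal{O})$ is the corresponding socle generator; for $S=\mathbb{A}^{2}$, $p=(0,0)$ one gets $\xi_p=[1/(xy)]$ and $\mathcal{V}_p\cong\mathbb{S}^{3}\cong\mathrm{SL}_{2}(\mathbb{C})$, and as there one checks that $\mathcal{V}_p$ is a smooth affine threefold.

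For a) I would argue topologically: a $\mathbb{G}_{a}$-torsor is, for the euclidean topology, a locally trivial $\mathbb{C}$-bundle with structure group the contractible group $(\mathbb{C},+)$, hence trivial, so $\mathcal{V}_p\cong_{\mathrm{diff}}\mathbb{R}^{2}\times(S\setminus\{p\})$. Now $S$ is a contractible open smooth $4$-manifold, so $S\times\mathbb{R}^{2}$ is a contractible open smooth $6$-manifold; a van Kampen computation at infinity shows it is simply connected at infinity, whence $S\times\mathbb{R}^{2}\cong_{\mathrm{diff}}\mathbb{R}^{6}$ by the Stallings--Siebenmann theorem. Under this diffeomorphism $\{p\}\times\mathbb{R}^{2}$ becomes a properly embedded copy of $\mathbb{R}^{2}$, which is unknotted by Zeeman's theorem since the codimension is $4\ge 3$; therefore $\mathcal{V}_p\cong_{\mathrm{diff}}(S\times\mathbb{R}^{2})\setminus(\{p\}\times\mathbb{R}^{2})\cong_{\mathrm{diff}}\mathbb{R}^{6}\setminus\mathbb{R}^{2}\cong_{\mathrm{diff}}S^{3}\times\mathbb{R}^{3}\cong_{\mathrm{diff}}\mathrm{SL}_{2}(\mathbb{C})\cong_{\mathrm{diff}}\mathbb{S}^{3}$.

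For b) the implication ``$\Leftarrow$'' is soft: an automorphism $\sigma$ of $S$ with $\sigma(p)=p'$ restricts to an isomorphism $S\setminus\{p\}\xrightarrow{\sim}S\setminus\{p'\}$, and since $\sigma\times\sigma$ preserves $\Delta$ and the socle class is canonical up to a scalar, $\sigma^{*}\xi_{p'}=\lambda\xi_p$ for some $\lambda\in\mathbb{C}^{*}$, so the torsors correspond and $\sigma$ lifts to $\mathcal{V}_p\xrightarrow{\sim}\mathcal{V}_{p'}$. For ``$\Rightarrow$'', set $A:=\Gamma(S,\mathcal{O}_S)$; by Hartogs $\Gamma(S\setminus\{p\},\mathcal{O})=A$, so the translation $\mathbb{G}_{a}$-action $\alpha_p$ on $\mathcal{V}_p$ has ring of invariants $h^{*}A$ and algebraic quotient morphism the canonical map $\bar{\pi}_p\colon\mathcal{V}_p\to\mathrm{Spec}(h^{*}A)=S$, with image the dense open set $S\setminus\{p\}$. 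The hard part, and the step that must go beyond \cite{DuFin}, is the rigidity claim that \emph{every nontrivial $\mathbb{G}_{a}$-action on $\mathcal{V}_p$ has ring of invariants exactly $h^{*}A$} --- equivalently $\mathrm{ML}(\mathcal{V}_p)=h^{*}A$ and $\bar{\pi}_p$ is intrinsic to the variety $\mathcal{V}_p$. When $S\cong\mathbb{A}^{2}$ the assertion of b) is vacuous, so one may assume $S\not\cong\mathbb{A}^{2}$, hence $\bar{\kappa}(S)\ge 0$, so $S$ --- and therefore also $S\setminus\{p\}$ --- admits no $\mathbb{A}^{1}$-fibration and no nontrivial $\mathbb{G}_{a}$-action. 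I expect the rigidity claim to follow from a careful classification of the locally nilpotent derivations $\partial$ of $\Gamma(\mathcal{V}_p)$, reducing each of them to the translation action by using the absence of $\mathbb{A}^{1}$-fibrations on the base, the vanishing of $\mathrm{Pic}(S\setminus\{p\})$, and the homotopy type $S\setminus\{p\}\simeq S^{3}$; carrying out this reduction is the main obstacle.

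Granting the rigidity claim, I would finish b) as follows. Given an isomorphism $\Phi\colon\mathcal{V}_p\xrightarrow{\sim}\mathcal{V}_{p'}$, transport $\alpha_p$ to the $\mathbb{G}_{a}$-action $\alpha':=\Phi\,\alpha_p\,\Phi^{-1}$ on $\mathcal{V}_{p'}$; by rigidity applied to $\mathcal{V}_{p'}$ both $\alpha'$ and the canonical action $\alpha_{p'}$ have ring of invariants $h'^{*}A$ (with $h'\colon\mathcal{V}_{p'}\to S$), hence the same algebraic quotient morphism $\bar{\pi}_{p'}\colon\mathcal{V}_{p'}\to S$, whose image is $S\setminus\{p'\}$. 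Since $\Phi$ is equivariant and the quotient morphisms are universal among invariant morphisms, there is an isomorphism $\psi\in\mathrm{Aut}(S)$ with $\bar{\pi}_{p'}\circ\Phi=\psi\circ\bar{\pi}_p$; comparing images yields $\psi(S\setminus\{p\})=\psi(\bar{\pi}_p(\mathcal{V}_p))=\bar{\pi}_{p'}(\Phi(\mathcal{V}_p))=S\setminus\{p'\}$, so $\psi(p)=p'$ and $p,p'$ lie in the same $\mathrm{Aut}(S)$-orbit. The same rigidity statement also shows $\mathcal{V}_p\not\cong\mathbb{S}^{3}$ whenever $S\not\cong\mathbb{A}^{2}$, since then $S\setminus\{p\}\not\cong\mathbb{A}^{2}_{*}$, so the $\mathcal{V}_p$ are genuinely exotic affine $3$-spheres. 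To summarize, everything reduces to the rigidity of the threefold $\mathcal{V}_p$ --- the classification of all its locally nilpotent derivations --- which I regard as the crux and which is substantially more delicate than the two-dimensional facts about $S$ that it rests on.
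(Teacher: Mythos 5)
Your construction of the family and your proof of a) are essentially the paper's. You build the same canonical $\mathbb{G}_a$-torsor over $(S\times S)\setminus\Delta$ (your ``socle generator'' of $H^2_{\Delta}(S\times S,\mathcal{O})$, well defined up to $\mathbb{C}^*$ because $\Gamma(S,\mathcal{O}_S)^*=\mathbb{C}^*$, is exactly the extension class the paper normalizes to the constant section $1$ of $\det\mathcal{N}_{\Delta/S\times S}\simeq\mathcal{O}_S$ in the Serre construction), and your topological argument for a) --- differentiable triviality of the torsor, $S\times\mathbb{R}^2\cong\mathbb{R}^6$, unknottedness of the properly embedded $\mathbb{R}^2$ in codimension $\geq 3$ --- is the paper's proof of Proposition \ref{prop:Diffeo-Sphere} with interchangeable citations (Stallings--Siebenmann versus McMillan--Zeeman, Zeeman/Haefliger unknotting versus the proper-homotopy-plus-isotopy argument from Hirsch). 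The ``if'' direction of b) is also fine and matches the paper.

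The genuine gap is in the ``only if'' direction of b): your entire argument hinges on the rigidity claim that every nontrivial $\mathbb{G}_a$-action on $\mathcal{V}_p$ has ring of invariants $h^*A$, i.e.\ that $\mathrm{ML}(\mathcal{V}_p)=A$ and the quotient morphism to $S$ is intrinsic. You state this, correctly identify it as the crux, and do not prove it; as written the proof of b) is therefore incomplete. Classifying all locally nilpotent derivations of $\Gamma(\mathcal{V}_p,\mathcal{O})$ is a delicate problem (it is essentially the technique of Finston--Maubach for the bundles over Brieskorn surfaces, and it is not clear how to rule out a priori an exotic $\mathbb{G}_a$-action whose general orbits map to curves in $S\setminus\{p\}$ without further input). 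The paper avoids this entirely: in Proposition \ref{prop:Isocrit} it invokes the Iitaka--Fujita Strong Cancellation Theorem \cite{IiFu77}, adapted to locally trivial $\mathbb{A}^1$-bundles, which says directly that since $\kappa(S\setminus\{p\})\geq 0$ \emph{every} abstract isomorphism $V_p\rightarrow V_{p'}$ descends to an isomorphism $S\setminus\{p\}\rightarrow S\setminus\{p'\}$ of the bases; this then extends to an automorphism of $S$ by normality and the codimension-$2$ condition. If you want to complete your argument along your own lines, you should either prove the rigidity claim or replace it by this cancellation-theorem step, which is strictly weaker than computing the Makar--Limanov invariant but suffices for b) and for exoticity (Corollary \ref{cor:Exotic-Sphere}).
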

If $S=\mathbb{A}^{2}$, then the family $h:\mathcal{V}\rightarrow\mathbb{A}^{2}$
is actually isomorphic to the trivial one $\mathbb{S}^{3}\times\mathbb{A}^{2}$.
But in contrast, if $S$ is a general smooth topologically contractible
complex algebraic surface $S$ of non-negative Kodaira dimension $\kappa(S)$,
$\mathrm{Aut}(S)$ is trivial and the closed fibers of $h:\mathcal{V}\rightarrow S$
are pairwise non isomorphic exotic affine $3$-spheres. 

\section{Preliminaries}

Here we review a construction of Zariski locally trivial $\mathbb{A}^{1}$-bundles
over the complement of certain closed subschemes of pure codimension
$2$ in a given scheme that we will use repeatedly in the article.
We establish a criterion for isomorphism of total spaces of certain
of these bundles. 

\subsection{\label{subsec:Bundle-Const}Codimension $2$ subschemes and locally
trivial $\mathbb{A}^{1}$-bundles }

Suppose given a scheme $Y$ and closed subscheme $Z\subset Y$ of
pure codimension $2$ with ideal sheaf $\mathcal{I}_{Z}\subset\mathcal{O}_{Y}$
for which we have a locally free resolution \begin{eqnarray}\label{eq:Seq1} \xymatrix{ 0 \ar[r] & \mathcal{L} \ar[r]^{a} & \mathcal{F} \ar[r]^{b} & \mathcal{I}_Z \ar[r] & 0,}\end{eqnarray} 
where $\mathcal{L}$ and $\mathcal{F}$ are locally free sheaves on
$Y$, of rank $1$ and $2$ respectively. In particular, $Z$ is a
local complete intersection in $Y$. We let $\overline{q}:X=\mathbb{P}(\mathcal{F})\rightarrow Y$
be the relative $\mathrm{Proj}$ of the symmetric algebra $\mathrm{Sym}_{Y}^{\cdot}\mathcal{F}$
of $\mathcal{F}$ and we let $H\subset X$ be the closed subscheme
determined by the surjection $b:\mathcal{F}\rightarrow\mathcal{I}_{Z}$.
We let $V_{Z}=X\setminus H$ and we let $p:L=\mathrm{Spec}_{Y\setminus Z}(\mathrm{Sym}^{\cdot}\mathcal{L}^{-1}|_{Y\setminus Z})\rightarrow Y\setminus Z$
be the restriction to $Y\setminus Z$ of the total space of the invertible
sheaf $\mathcal{L}$.
\begin{prop}
\label{prop:Bundle-construction}The scheme $V_{Z}$ is affine over
$Y$ and $\overline{q}|_{V_{Z}}$ factors through an $L$-torsor $\rho_{Z}:V_{Z}\rightarrow Y\setminus Z$
whose isomorphy class in $H^{1}(Y\setminus Z,L)\simeq\mathrm{Ext}_{Y\setminus Z}^{1}(\mathcal{O}_{Y\setminus Z},\mathcal{L})$
coincide with that of the restriction \[\xymatrix{0 \ar[r] & \mathcal{L}|_{Y\setminus Z} \ar[r]^{a} & \mathcal{F}|_{Y\setminus Z} \ar[r]^-{b} & \mathcal{I}_Z|_{Y\setminus Z}\simeq \mathcal{O}_{Y\setminus Z} \ar[r] & 0}\]of
the exact sequence $($\ref{eq:Seq1}$)$ to $Y\setminus Z$. . 
\end{prop}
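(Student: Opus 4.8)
The plan is to realise $V_{Z}$ concretely as the complement in $X=\mathbb{P}(\mathcal{F})$ of the zero locus of a section of a relatively ample invertible sheaf, and then, over $Y\setminus Z$, to identify this complement canonically with the scheme parametrising the local retractions of the inclusion $a$ appearing in $(\ref{eq:Seq1})$.

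First I would make $H$ explicit. Let $\overline{q}^{*}\mathcal{F}\twoheadrightarrow\mathcal{O}_{X}(1)$ be the tautological quotient on $X$. Unwinding the definitions, $H$ is the zero scheme of the global section $s$ of $\mathcal{N}:=\mathcal{O}_{X}(1)\otimes\overline{q}^{*}\mathcal{L}^{-1}$ given by the composite $\overline{q}^{*}\mathcal{L}\xrightarrow{\overline{q}^{*}a}\overline{q}^{*}\mathcal{F}\to\mathcal{O}_{X}(1)$. Since $\mathcal{O}_{X}(1)$ is $\overline{q}$-ample, so is $\mathcal{N}$, and hence the complement $V_{Z}=X\setminus H$ of the zero locus of a section of a relatively ample invertible sheaf is affine over $Y$. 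To see that $\overline{q}|_{V_{Z}}$ factors through the open subscheme $Y\setminus Z$, observe that for $z\in Z$ the map $b\otimes k(z)\colon\mathcal{F}\otimes k(z)\to\mathcal{I}_{Z}\otimes k(z)$ is a surjection between $k(z)$-vector spaces of dimension $2$ — here I use that $Z$, being a local complete intersection of codimension $2$, has locally free conormal sheaf of rank $2$ — hence an isomorphism, so that $a\otimes k(z)=0$, the section $s$ vanishes identically along the fibre $\overline{q}^{-1}(z)$, and $V_{Z}\cap\overline{q}^{-1}(Z)=\emptyset$. Thus $\overline{q}|_{V_{Z}}$ defines a morphism $\rho_{Z}\colon V_{Z}\to Y\setminus Z$, which is again affine because $Y\setminus Z\hookrightarrow Y$ is separated.

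Next I would analyse $\rho_{Z}$. Over $Y\setminus Z$, where $\mathcal{I}_{Z}\simeq\mathcal{O}$, the sequence $(\ref{eq:Seq1})$ restricts to a short exact sequence $0\to\mathcal{L}\xrightarrow{a}\mathcal{F}\xrightarrow{b}\mathcal{O}\to 0$ of locally free sheaves, and the surjection $b$ onto an invertible sheaf determines a section $\sigma\colon Y\setminus Z\to X$; since $\sigma^{*}s=b\circ a=0$ while $s$ restricts to a section of degree $1$ on each fibre $\mathbb{P}^{1}$, one gets $H|_{Y\setminus Z}=\sigma(Y\setminus Z)$, so that $V_{Z}$ is the complement of a section in the $\mathbb{P}^{1}$-bundle $\mathbb{P}(\mathcal{F})|_{Y\setminus Z}$. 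I would then invoke the standard description of such a complement: a $T$-valued point of $\mathbb{P}(\mathcal{F})$ not meeting $\sigma$ is an invertible quotient $q\colon\mathcal{F}_{T}\to\mathcal{M}$ for which $q\circ a$ is an isomorphism, and replacing $q$ by $(q\circ a)^{-1}\circ q$ identifies such a point, canonically and functorially in $T$, with a retraction $r\colon\mathcal{F}_{T}\to\mathcal{L}_{T}$ of $a$. As the retractions of $a$ form a torsor under $\mathcal{H}om(\mathcal{F}/a(\mathcal{L}),\mathcal{L})$, which is canonically $\mathcal{L}|_{Y\setminus Z}$ via the isomorphism induced by $b$, this equips $\rho_{Z}\colon V_{Z}\to Y\setminus Z$ with the structure of an $L$-torsor.

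Finally, to compare the classes, I would choose a Zariski open cover $(U_{i})_{i}$ of $Y\setminus Z$ over which $\rho_{Z}$ is trivial. By the previous step the associated sections of $\rho_{Z}$ over the $U_{i}$ are precisely retractions $r_{i}\colon\mathcal{F}|_{U_{i}}\to\mathcal{L}|_{U_{i}}$ of $a$ — in particular the restriction of $(\ref{eq:Seq1})$ to each $U_{i}$ splits — and on the overlaps the differences $r_{i}-r_{j}\in\mathcal{H}om(\mathcal{O},\mathcal{L})(U_{ij})=\mathcal{L}(U_{ij})$ are simultaneously the \v{C}ech $1$-cocycle describing the $L$-torsor $V_{Z}$ in $H^{1}(Y\setminus Z,L)$ and the usual \v{C}ech $1$-cocycle representing the extension class of $0\to\mathcal{L}\to\mathcal{F}\to\mathcal{O}\to 0$ under the identification $\mathrm{Ext}_{Y\setminus Z}^{1}(\mathcal{O},\mathcal{L})\simeq H^{1}(Y\setminus Z,\mathcal{L})$. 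Hence the two classes coincide, which is the assertion. The step I expect to be the main obstacle is the third one: correctly pinning down the $L$-torsor structure on the complement of the section — in particular getting the twisting sheaf to be $\mathcal{L}$ itself, rather than its dual or a twist of it by $\det\mathcal{F}$, which is precisely where the convention for $\mathbb{P}(\mathcal{F})$ enters — and verifying that the identification with the scheme of retractions of $a$ is canonical; granting this, the affineness and the cocycle comparison are routine.
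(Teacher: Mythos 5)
Your proof is correct and follows the same overall skeleton as the paper's --- identify $H$, show it is relatively ample and contains $\overline{q}^{-1}(Z)$ while restricting to a section of $\overline{q}$ over $Y\setminus Z$, and read off the torsor class --- but you execute the two key identifications differently. Where the paper observes that $\mathrm{Sym}_{Y}^{\cdot}\mathcal{I}_{Z}$ is locally the Rees algebra of $\mathcal{I}_{Z}$, so that $\overline{q}|_{H}:H\rightarrow Y$ is locally the blow-up of $Y$ along $Z$ (whence $\overline{q}^{-1}(Z)\subset H$ and $H$ is a section over $Y\setminus Z$), you realise $H$ as the zero scheme of the section of $\mathcal{O}_{X}(1)\otimes\overline{q}^{*}\mathcal{L}^{-1}$ induced by $a$ and check fibrewise that $a\otimes k(y)$ vanishes exactly for $y\in Z$; both arguments ultimately rest on $Z$ being a codimension-$2$ local complete intersection, and both are sound. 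More substantially, the paper dismisses the identification of the torsor class with the extension class as following ``directly from the construction'', whereas you actually supply it, via the functorial description of $V_{Z}|_{Y\setminus Z}$ as the scheme of retractions of $a$ together with the \v{C}ech comparison of the two cocycles $r_{i}-r_{j}$. Since this identification is what is genuinely used later (e.g.\ in the proof of Proposition \ref{prop:Isocrit}), spelling it out is a net gain; and the convention issue you flag as the main obstacle is resolved correctly, because $X=\mathrm{Proj}(\mathrm{Sym}_{Y}^{\cdot}\mathcal{F})$ is the Grothendieck bundle of invertible quotients of $\mathcal{F}$, which is exactly what your retraction argument requires.
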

\begin{proof}
By definition, $H$ is equal to the closed subscheme $\mathrm{Proj}_{Y}(\mathrm{Sym}^{\cdot}\mathcal{I}_{Z})$
of $X$, and since $Z$ is a local complete intersection of codimension
$2$ in $Y$, $\mathrm{Sym}_{Y}^{\cdot}\mathcal{I}_{Z}$ is locally
isomorphic as a sheaf of graded $\mathcal{O}_{Y}$-algebra to the
Rees $\mathcal{O}_{Y}$-algebra $\bigoplus_{n\geq0}\mathcal{I}_{Z}^{n}$
of $\mathcal{I}_{Z}$. So locally over $Y$, $\overline{q}|_{H}:H\rightarrow Y$
is isomorphic to the blow-up of $Y$ along $Z$, implying in particular
that $\overline{q}^{-1}(Z)$ is fully contained in $H$ while $H|_{X\setminus\overline{q}^{-1}(Z)}$
is a section of the restriction of the locally trivial $\mathbb{P}^{1}$-bundle
$\overline{q}:X\rightarrow Y$ over $Y\setminus Z$. As a consequence,
the image of $q$ is equal to $Y\setminus Z$ and the corestriction
$\rho_{Z}:V_{Z}\rightarrow Y\setminus Z$ of $\overline{q}|_{V_{Z}}$
to its image is a locally trivial $\mathbb{A}^{1}$-bundle. Since
$H$ is relatively ample over $Y$, $V_{Z}=X\setminus H$ is affine
over $Y$, and  the fact that $\rho_{Z}:V_{Z}\rightarrow Y\setminus Z$
is actually an $L$-torsor with the given isomorphy class follows
directly from the construction. 
\end{proof}
\begin{example}
\label{exa:CompleteInter}In the case where $Z$ is a global complete
intersection in $Y$, the above construction applied to a free resolution
\[\xymatrix{ 0 \ar[r] & \mathcal{O}_Y \ar[rr]^{^t(-g,f)}  & & \mathcal{O}_Y^{\oplus 2} \ar[rr]^-{(f,g)} & & \mathcal{I}_Z \ar[r] & 0} \]
for some global regular functions $f,g\in\Gamma(Y,\mathcal{O}_{Y})$
such that $Z=V(f,g)$ yields a scheme $X$ is isomorphic to $Y\times\mathbb{P}_{[u:v]}^{1}$,
for which $H$ is equal to the closed subscheme $\left\{ fv-gu=0\right\} $.
The morphism 
\[
V_{Z}=X\setminus H\rightarrow Y\times\mathrm{Spec}(\mathbb{C}[U,V]),\quad(y,[u:v])\mapsto(y,\frac{u}{fv-gu},\frac{v}{fv-gu})
\]
then induces an isomorphism between $V_{Z}$ and the closed sub-subscheme
of $Y\times\mathrm{Spec(\mathbb{C}[U,V])}$ with equation $fV-gU=1$.
Note that if \[\xymatrix{ 0 \ar[r] & \mathcal{O}_Y \ar[rr]^{^t(-g',f')}  & & \mathcal{O}_Y^{\oplus 2} \ar[rr]^-{(f',g')} & & \mathcal{I}_Z \ar[r] & 0} \]is
another free resolution, then there exists $\varphi=\left(\begin{array}{cc}
a & b\\
c & d
\end{array}\right)\in\mathrm{GL}_{2}(\Gamma(Y,\mathcal{O}_{Y}))$ such that $f'=af+bg$ and $g'=cf+dg$. This element $\varphi$ determines
in turn an $Y$-automorphism $(U,V)\mapsto(-cU+aV,dU-bV)$ of $Y\times\mathrm{Spec}(\mathbb{C}[U,V])$
which maps $\{f'V-g'U=1\}$ isomorphically onto $\{fV-gU=1\}$.
\end{example}

\subsection{A criterion for isomorphism }

The following proposition provides a criterion to decide when the
total spaces of certain torsors $\rho_{Z}:V_{Z}\rightarrow Y\setminus Z$
and $\rho_{Z'}:V_{Z'}\rightarrow Y'\setminus Z'$ as above are isomorphic
as abstract algebraic varieties. 
\begin{prop}
\label{prop:Isocrit}Let $(Y,Z)$ and $(Y',Z')$ be pairs consisting
of a normal affine variety and a scheme theoretic global complete
intersection of pure codimension $2$, and let $\rho_{Z}:V_{Z}\rightarrow Y\setminus Z$
and $\rho_{Z'}:V_{Z'}\rightarrow Y'\setminus Z'$ be the corresponding
varieties. If either $Y\setminus Z$ or $Y'\setminus Z$ has non-negative
Kodaira dimension, then $V_{Z}$ and $V_{Z'}$ are isomorphic as abstract
varieties if and only if the pairs $(Y,Z)$ and $(Y',Z')$ are isomorphic. 
\end{prop}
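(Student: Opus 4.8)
The implication from right to left is the elementary one: an isomorphism of pairs $(Y,Z)\cong (Y',Z')$ carries a regular sequence $(f',g')$ defining $\mathcal{I}_{Z'}$ to a regular sequence defining $\mathcal{I}_Z$, and by Example \ref{exa:CompleteInter} the affine variety $\{fV-gU=1\}\subset Y\times\mathrm{Spec}(\mathbb{C}[U,V])$ attached to a defining regular sequence $(f,g)$ is, up to isomorphism, independent of the chosen sequence; so $V_{Z'}\cong V_Z$. The substance of the proposition is the converse, and the strategy is to recover the pair $(Y,Z)$ from the abstract variety $V_Z$ through its $\mathbb{A}^{1}$-bundle structure, using the Kodaira dimension hypothesis to make that structure essentially unique.

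By Example \ref{exa:CompleteInter} we may assume $V_Z=\{fV-gU=1\}\subset Y\times\mathrm{Spec}(\mathbb{C}[U,V])$, an integral affine variety since $Y$ is affine; set $A_Z=\Gamma(V_Z,\mathcal{O}_{V_Z})$. The rule $\partial_Z(U)=f$, $\partial_Z(V)=g$, $\partial_Z|_{\Gamma(Y,\mathcal{O}_Y)}=0$ defines a locally nilpotent derivation of $A_Z$ whose associated free $\mathbb{G}_{a}$-action has $\rho_Z\colon V_Z\to Y\setminus Z$ as geometric quotient; since $Y$ is normal and $Z$ has codimension $\geq 2$, one has $\ker\partial_Z=\Gamma(Y\setminus Z,\mathcal{O})=\Gamma(Y,\mathcal{O}_Y)$. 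Let now $\Phi\colon V_Z\to V_{Z'}$ be an isomorphism. Conjugating by $\Phi$ the derivation $\partial_{Z'}$ built the same way on $A_{Z'}$ produces a second locally nilpotent derivation $\partial_Z'$ of $A_Z$, with quotient morphism $\sigma=\rho_{Z'}\circ\Phi\colon V_Z\to Y'\setminus Z'$ and with kernel identified with $\Gamma(Y',\mathcal{O}_{Y'})$. Thus $V_Z$ carries two $\mathbb{A}^{1}$-bundle structures $\rho_Z$ and $\sigma$.

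The key step is to show that $\rho_Z$ and $\sigma$ have the same general fibers. If not, then, their general fibers being one-dimensional and isomorphic to $\mathbb{A}^{1}$, a general fiber of $\sigma$ is not contracted by $\rho_Z$; trivializing $\sigma$ over a dense open $U_0\subset Y'\setminus Z'$ and composing with $\rho_Z$ yields a dominant morphism $U_0\times\mathbb{A}^{1}\to Y\setminus Z$ whose restriction to a general slice $\{u\}\times\mathbb{A}^{1}$ is non-constant, so that $Y\setminus Z$ is dominated by a family of affine lines covering a dense subset; exchanging the roles of $\rho_Z$ and $\sigma$ shows the same for $Y'\setminus Z'$. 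But a normal variety that is $\mathbb{A}^{1}$-uniruled in this sense has logarithmic Kodaira dimension $-\infty$ --- classical for surfaces by Miyanishi--Sugie and Fujita, and available in general --- contradicting the hypothesis on one of $Y\setminus Z$, $Y'\setminus Z'$. I expect this to be the only genuinely non-formal point of the argument.

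Once $\rho_Z$ and $\sigma$ share their general fibers, a rational function on $V_Z$ is constant along the general fibers of one exactly when it is along those of the other, so $\mathrm{Frac}(\ker\partial_Z)=\mathrm{Frac}(\ker\partial_Z')$ inside $\mathrm{Frac}(A_Z)$; as kernels of locally nilpotent derivations are factorially closed, each of them equals its own fraction field intersected with $A_Z$, whence $\ker\partial_Z=\ker\partial_Z'$ as subrings of $A_Z$. Therefore $Y=\mathrm{Spec}(\ker\partial_Z)=\mathrm{Spec}(\ker\partial_Z')=Y'$, the two morphisms $V_Z\to Y$ and $V_Z\to Y'$ coincide and so do their images $Y\setminus Z$ and $Y'\setminus Z'$; in particular $\rho_Z=\sigma$, so $\partial_Z$ and $\partial_Z'$ are nowhere-vanishing vertical locally nilpotent derivations for one and the same $\mathbb{A}^{1}$-bundle and hence differ by a unit $c$ of $\Gamma(Y,\mathcal{O}_Y)$. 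Finally, $\mathcal{I}_Z$ is recovered intrinsically from $(A_Z,\partial_Z)$ as the plinth ideal $\partial_Z(\ker(\partial_Z^{2}))$, which for the model $\{fV-gU=1\}$ equals $(f,g)$; since $\partial_Z'=c\,\partial_Z$ with $c$ a unit, $\ker((\partial_Z')^{2})=\ker(\partial_Z^{2})$ and $\partial_Z'(\ker((\partial_Z')^{2}))=c\,(f,g)=(f,g)$, so $\mathcal{I}_{Z'}=\mathcal{I}_Z$. Under the identifications obtained above this says precisely that $(Y,Z)$ and $(Y',Z')$ are isomorphic, which completes the proof.
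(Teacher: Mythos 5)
Your argument is correct and reaches the same conclusion by the same overall skeleton as the paper --- (i) force the isomorphism to descend to the bases using the Kodaira dimension hypothesis, (ii) extend across the codimension-$2$ subset using normality, (iii) match the scheme structures of $Z$ and $Z'$, not just their supports --- but you execute steps (i) and (iii) differently. For (i), the paper simply invokes a straightforward adaptation of the Iitaka--Fujita Strong Cancellation Theorem \cite{IiFu77}, whereas you unpack its proof: if the two $\mathbb{A}^{1}$-bundle structures $\rho_{Z}$ and $\sigma$ did not share their general fibers, a trivialization of one composed with the other would exhibit $Y\setminus Z$ (and symmetrically $Y'\setminus Z'$) as $\mathbb{A}^{1}$-uniruled, forcing $\overline{\kappa}=-\infty$ after cutting the parameter space down to make the map generically finite and applying Iitaka's inequality. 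This is exactly the mechanism behind the cited theorem, so your version is more self-contained but not shorter. For (iii), the paper identifies $\mathcal{I}_{Z}$ with $\mathcal{I}_{Z''}$ by writing down an isomorphism of the two extensions of $\mathcal{O}_{Y_{*}}$ by $\mathcal{O}_{Y_{*}}$, i.e.\ a matrix in $\mathrm{GL}_{2}(\Gamma(Y,\mathcal{O}_{Y}))$ together with a unit $\lambda$, and reading off $(af+bg,cf+dg)=(\lambda f,\lambda g)$; you instead recover $\mathcal{I}_{Z}$ intrinsically as the plinth ideal of the locally nilpotent derivation. Your route is arguably more conceptual (the ideal is read off from the pair ``ring plus derivation'' with no choice of generators), while the paper's stays entirely within the torsor formalism it has already set up in Example \ref{exa:CompleteInter}.

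The one claim you should actually prove rather than assert is that the plinth ideal of $\partial_{Z}$ on $\{fV-gU=1\}$ equals $(f,g)$. The inclusion $(f,g)\subset\partial_{Z}(A_{Z})\cap B$ is clear from $\partial_{Z}(U)=f$, $\partial_{Z}(V)=g$, but the reverse inclusion is not a local computation (locally over $\{f\neq0\}$ the bundle is trivial and the local plinth ideal is the unit ideal, so localization gives nothing). The clean argument is cohomological: an element $c\in B$ lies in $\partial_{Z}(A_{Z})\cap B$ if and only if $c\cdot\alpha=0$ in $H^{1}(Y\setminus Z,\mathcal{O})$, where $\alpha$ is the class of the torsor $\rho_{Z}$, namely the image of $1\in\Gamma(Y\setminus Z,\mathcal{I}_{Z}|_{Y\setminus Z})$ under the connecting homomorphism of the sequence $0\rightarrow\mathcal{O}\rightarrow\mathcal{O}^{\oplus2}\rightarrow\mathcal{I}_{Z}|_{Y\setminus Z}\rightarrow0$; the long exact sequence then shows $c\alpha=0$ exactly when $c\in(f,g)\Gamma(Y\setminus Z,\mathcal{O})=(f,g)B$, the last equality again using normality of $Y$ and $\mathrm{codim}\,Z=2$. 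With that lemma supplied (and a one-line verification that two fixed-point-free vertical locally nilpotent derivations of the same $\mathbb{A}^{1}$-bundle differ by a global unit), your proof is complete.
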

\begin{proof}
Given an isomorphism $\varphi:Y\rightarrow Y'$ mapping $Z$ onto
$Z'$, it follows from the construction of $V_{Z}$ and $V_{Z'}$
that $V_{Z}$ and $V_{Z'}\times_{S'}S\simeq V_{Z'}$ are isomorphic
as schemes over $Y$. Conversely, if either $\kappa(Y\setminus Z)$
or $\kappa(Y'\setminus Z')$ is non-negative, then it follows from
a straightforward adaptation of the proof of the Iitaka-Fujita Strong
Cancellation Theorem \cite{IiFu77} to the case of locally trivial
$\mathbb{A}^{1}$-bundles, that every isomorphism $\Psi:V_{Z}\rightarrow V_{Z'}$
descends to a unique isomorphism $\psi:Y\setminus Z\rightarrow Y'\setminus Z'$
for which the following diagram is commutative \[\xymatrix{ V_p \ar[r]^{\Psi} \ar[d]_{\rho_Z} & V_{p'} \ar[d]^{\rho_{Z'}} \\ Y\setminus Z \ar[r]^{\psi}& Y'\setminus Z'. }\]
Since $Y$ and $Y'$ are affine and normal, and $Z$ and $Z'$ have
pure codimension $2$ in $Y$ and $Y'$ respectively, $\psi$ uniquely
extends to an isomorphism $\varphi:Y\rightarrow Y'$ such that $\varphi(Z_{\mathrm{red}})=Z'_{\mathrm{red}}$.
Let $Z=V(f,g)$, $Z'=V(f',g')$ and let $Z''=\varphi^{-1}(Z')=V(f'',g'')$
where $f''=f'\circ\varphi$ and $g''=g'\circ\varphi$. The isomorphism
$\Psi$ factors through an isomorphism $\tilde{\Psi}:V_{Z}\rightarrow V_{Z''}\simeq V_{Z'}\times_{Y'}Y$
of locally trivial $\mathbb{A}^{1}$-bundles over $Y\setminus Z$.
Now $V_{Z}$ and $V_{Z''}$ are isomorphic as locally trivial $\mathbb{A}^{1}$-bundles
over $Y_{*}=Y\setminus Z=Y\setminus Z''$ if and only if there exists
an isomorphism of extensions \[\xymatrix{0 \ar[r] & \mathcal{O}_{Y_*} \ar@{=}[d] \ar[rr]^{^t(-g,f)} & & \mathcal{O}_{Y_*}^{\oplus 2} \ar[d]^{\wr}_{\varphi} \ar[rr]^-{(f,g)} & & \mathcal{I}_Z|_{Y_*}\simeq \mathcal{O}_{Y_*} \ar[d]^{\wr}_{\times \lambda} \ar[r] & 0 \\ 0 \ar[r] & \mathcal{O}_{Y_*} \ar[rr]^{^t(-g'',f'')} & & \mathcal{O}_{Y_*}^{\oplus 2} \ar[rr]^-{(f'',g'')} & & \mathcal{I}_{Z''}|_{Y_*}\simeq \mathcal{O}_{Y_*} \ar[r] & 0,}\]where
the isomorphism on the middle is given by an element $\varphi=\left(\begin{array}{cc}
a & b\\
c & d
\end{array}\right)\in\mathrm{GL}_{2}(\Gamma(Y_{*},\mathcal{O}_{Y_{*}}))=\mathrm{GL}_{2}(\Gamma(Y,\mathcal{O}_{Y}))$ and the isomorphism on the right-hand side is the multiplication
by an element $\lambda\in\Gamma(Y_{*},\mathcal{O}_{Y_{*}}^{*})=\Gamma(Y,\mathcal{O}_{Y}^{*})$.
So $\mathcal{I}_{Z''}=((af+bg),(cf+dg))=(\lambda f,\lambda g)=\mathcal{I}_{Z}$,
which implies that $\varphi(Z)=Z'$ as desired. 
\end{proof}
\begin{example}
\label{exa:FinMau }(See \cite[Example 1]{Finston2008}) Since the
smooth locus $S_{p,q,r}^{*}=S_{p,q,r}\setminus\{(0,0,0)\}$ of a Brieskorn
surface $S_{p,q,r}=\left\{ x^{p}+y^{q}+z^{r}=0\right\} $ in $\mathbb{A}^{3}$,
where $p,q,r$ are pairwise relatively prime and $1/p+1/q+1/r<1$,
has Kodaira dimension equal to $1$ \cite{Ii77-2}, it follows from
Proposition \ref{prop:Isocrit} that the total spaces of the $\mathbb{A}^{1}$-bundles
\[
\rho_{m,n}:X_{m,n}=\left\{ x^{m}V-y^{n}U=1\right\} \rightarrow S_{p,q,r}^{*},\quad m,n\geq1
\]
associated to the closed subschemes $Z_{m,n}\subset S_{p,q,r}$ with
defining ideals $I_{m,n}=\left(x^{m},y^{n}\right)$, $m,n\geq1$ are
isomorphic if and only if the corresponding pairs $(S_{p,q,r},Z_{m,n})$
are isomorphic. Since $\mathrm{Aut}(S_{p,q,r})$ is isomorphic to
the multiplicative group $\mathbb{G}_{m}$, acting linearly on $S_{p,q,r}$
by $\lambda\cdot(x,y,z)=(\lambda^{qr}x,\lambda^{pr}y,\lambda^{qr}z)$,
two such pairs $(S_{p,q,r},Z_{m,n})$ and $(S_{p,q,r},Z_{m',n'})$
are isomorphic if and only if $(m,n)=(m',n')$, and so the varieties
$X_{m,n}$ are pairwise non isomorphic. 
\end{example}
\begin{rem}
In Proposition \ref{prop:Isocrit}, the hypothesis on the Kodaira
dimension is crucial. For instance, let $\mathbb{A}^{2}=\mathrm{Spec}(\mathbb{C}[x,y])$
and consider the $\mathbb{A}^{1}$-bundles 
\[
\rho_{m,n}:X_{m,n}=\left\{ x^{m}V-y^{n}U=1\right\} \rightarrow\mathbb{A}^{2}\setminus\{(0,0)\}
\]
associated to the closed subschemes $Z_{m,n}\subset\mathbb{A}^{2}$
with defining ideals $I_{m,n}=\left(x^{m},y^{n}\right)$, $m,n\geq1$.
It is not difficult to check that two pairs $(\mathbb{A}^{2},Z_{m,n})$
and $(\mathbb{A}^{2},Z_{m',n'})$ are isomorphic if and only if $\{m,n\}=\{m',n'\}$
(see e.g. \cite[Proposition 2.2]{DuFin}). On the other hand, it was
established in \cite[Theorem 2.3]{DuFin} that if $m+n=m'+n'$ then
$X_{m,n}$ and $X_{m',n'}$ are isomorphic as abstract varieties. 
\end{rem}

\section{algebraic families of exotic $3$-spheres over punctured contractible
surfaces}

In this section, we construct algebraic families of affine $3$-spheres
in the form of locally trivial $\mathbb{A}^{1}$-bundles over $1$-punctured
topologically contractible affine surfaces. Recall that a topological
manifold is called contractible if it has the homotopy type of a point.
A smooth complex algebraic variety is called contractible if it is
so when equipped with its underlying structure of real topological
manifold. 

\subsection{Basic properties of contractible surfaces}

Smooth contractible algebraic surfaces have been intensively studied
during the last decades. We just recall a few basic facts about these,
referring the reader to \cite[$\S$ 2]{Za00} and \cite[Chapter 3]{MiyBook}
and the references therein for a complete overview. First of all,
these surfaces are all affine and rational. They are partially classified
in terms of their (logarithmic) Kodaira dimension $\kappa$: $\mathbb{A}^{2}$
is the only such surface of negative Kodaira dimension, there is no
contractible surface of Kodaira dimension $0$, contractible surfaces
of Kodaira dimension $1$ are fully classified in terms of the structure
of their smooth completions. So far, there is no classification of
contractible surfaces of Kodaira dimension $2$, but many families
of examples have been constructed \cite{tD90,GuMi87,MS91,Ram71}. 

The Picard group of a smooth contractible surface $S$ is trivial
\cite[Lemma 4.2.1]{MiyBook}, and more generally, the affineness and
the rationality of $S$ combined with a result of Murthy \cite{Mur69}
imply that every locally free sheaf $\mathcal{E}$ of rank $r\geq2$
on $S$ is free. Of particular interest for us is the following consequence
of these facts: 
\begin{lem}
\label{lem:Contract-Complete-Inter}In a smooth contractible affine
surface $S$, every closed subscheme $Z\subset S$ of pure dimension
which is a scheme theoretic local complete intersection is a scheme
theoretic global complete intersection.

In particular, every closed point $p$ on a smooth contractible surface
$S$ is a scheme theoretic global complete intersection. 
\end{lem}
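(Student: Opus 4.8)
The plan is to reduce the statement to a computation in the Chow groups / $K$-theory of $S$, using the two cited facts: that $\operatorname{Pic}(S)$ is trivial and that every locally free sheaf of rank $\geq 2$ on $S$ is free (Murthy). So let $Z\subset S$ be a closed subscheme of pure dimension which is a scheme-theoretic local complete intersection; since $S$ is a surface, $Z$ has either pure dimension $1$ or pure dimension $0$. The case of pure codimension $1$ is immediate: a hypersurface which is locally principal is globally principal because $\operatorname{Pic}(S)=0$, so $\mathcal{I}_Z$ is a free $\mathcal{O}_S$-module of rank $1$, generated by a single regular function. The substantive case is pure codimension $2$, i.e.\ $Z$ a finite set of fat points; this is where the rest of the argument is spent.

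In the codimension $2$ case, the idea is to build a locally free resolution of $\mathcal{I}_Z$ of the shape appearing in \eqref{eq:Seq1}, and then invoke freeness of the middle term. Concretely, since $Z$ is a local complete intersection of codimension $2$, for each point of $Z$ the ideal is locally generated by a regular sequence of length $2$; a standard patching/prime-avoidance argument over the affine scheme $S$ produces a surjection $b:\mathcal{O}_S^{\oplus 2}\twoheadrightarrow \mathcal{I}_Z$ (two global generators suffice because $Z$ has codimension $2$ in a $2$-dimensional scheme: Serre's theorem, or Eisenbud--Evans / the Forster--Swan bound, gives generation by $\dim S=2$ elements after one checks the local number of generators is $\leq 2$ and the "free part" contributes nothing since $Z$ has positive codimension). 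Let $\mathcal{N}=\ker b$. From $0\to\mathcal{N}\to\mathcal{O}_S^{\oplus 2}\to\mathcal{I}_Z\to 0$ one sees $\mathcal{N}$ is reflexive, hence (on a smooth surface) locally free, and a rank count gives $\operatorname{rank}\mathcal{N}=1$. Computing determinants, $\det\mathcal{N}\otimes\det\mathcal{I}_Z^{\vee\vee}\cong\mathcal{O}_S$ and $\mathcal{I}_Z^{\vee\vee}\cong\mathcal{O}_S$ (as $Z$ has codimension $2$, so $\mathcal{I}_Z$ agrees with $\mathcal{O}_S$ in codimension $1$ and $\operatorname{Pic}(S)=0$), whence $\mathcal{N}$ is an invertible sheaf with trivial class in $\operatorname{Pic}(S)=0$, so $\mathcal{N}\cong\mathcal{O}_S$. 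This yields exactly a free resolution
\[
\xymatrix{ 0 \ar[r] & \mathcal{O}_S \ar[r]^{a} & \mathcal{O}_S^{\oplus 2} \ar[r]^-{b} & \mathcal{I}_Z \ar[r] & 0 }
\]
with $a={}^t(-g,f)$ and $b=(f,g)$ for global regular functions $f,g\in\Gamma(S,\mathcal{O}_S)$, and the exactness of this sequence forces $\mathcal{I}_Z=(f,g)$ with $f,g$ a regular sequence, i.e.\ $Z=V(f,g)$ is a scheme-theoretic global complete intersection.

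The last sentence of the lemma is the special case $Z=\{p\}$ a reduced closed point: a closed point on a smooth surface is cut out locally by two elements of a regular system of parameters, hence is a local complete intersection of pure codimension $2$, and the first part applies. The main obstacle I expect is the passage from "locally two generators" to "globally two generators" for $\mathcal{I}_Z$ in the codimension $2$ case — this is where one genuinely needs an Eisenbud--Evans/Serre-type bound together with the fact that $Z$ has positive codimension so that the rank-$0$ part of the relevant module imposes no extra generator; everything after that (reflexivity hence local freeness of the kernel on a smooth surface, the determinant computation, and killing the Picard class via \cite[Lemma 4.2.1]{MiyBook}) is routine. One should also double-check that Murthy's result \cite{Mur69} is not even needed here, since the kernel turns out to have rank $1$; it is the statement that higher-rank bundles are free that matters elsewhere in the paper, but for this lemma triviality of $\operatorname{Pic}(S)$ suffices.
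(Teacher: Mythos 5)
Your codimension-$1$ case and your final reduction are fine, but there is a genuine gap at the single step that carries all the content: the existence of a surjection $b:\mathcal{O}_S^{\oplus 2}\twoheadrightarrow\mathcal{I}_Z$. The general bounds you invoke do not give two generators. For the ideal of a zero-dimensional subscheme of an affine surface, the Forster--Swan/Eisenbud--Evans estimate is $\mu(I)\le\max_{\mathfrak p}\bigl(\mu_{\mathfrak p}(I)+\dim(A/\mathfrak p)\bigr)$, and the generic point contributes $1+2=3$; so these results only yield three global generators, and no prime-avoidance or general-position argument can improve this to two. Indeed the improvement is obstructed, not just unproved: if $\mathcal{I}_Z=(f,g)$ with $Z$ of codimension $2$ in a regular affine surface, then $f,g$ is automatically a regular sequence, the Koszul complex resolves $\mathcal{O}_Z$, and hence the class of $Z$ in $F^2K_0(S)\simeq CH_0(S)$ vanishes. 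On a general smooth affine complex surface (e.g.\ an affine open subset of a smooth projective surface of general type with $p_g>0$) there are closed points with nonzero class in $CH_0$, so such points are \emph{not} scheme-theoretic complete intersections even though they are local complete intersections. Since your justification of the surjection uses nothing about contractibility (you only bring in $\operatorname{Pic}(S)=0$ afterwards, to analyze the kernel), it would prove a false statement; the kernel analysis is in any case superfluous, because once the surjection exists the conclusion $Z=V(f,g)$ is immediate.

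The paper closes exactly this gap by running the argument in the opposite order: since $Z$ is zero-dimensional, its normal sheaf is free, so $\Lambda^2\mathcal{N}_{Z/S}\simeq\mathcal{O}_Z$, and the Serre (Hartshorne--Serre) construction produces an extension $0\to\mathcal{O}_S\to\mathcal{E}\to\mathcal{I}_Z\to0$ with $\mathcal{E}$ locally free of rank $2$; Murthy's theorem --- that every locally free sheaf of rank $\ge 2$ on a smooth contractible (affine rational, trivial Picard group) surface is free --- then gives $\mathcal{E}\simeq\mathcal{O}_S^{\oplus 2}$, and the composite $\mathcal{O}_S^{\oplus 2}\simeq\mathcal{E}\to\mathcal{I}_Z$ is the desired surjection. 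So your closing remark has it backwards: triviality of $\operatorname{Pic}(S)$ alone does not suffice, and Murthy's result on higher-rank bundles is precisely the ingredient this lemma needs. (Alternatively one could repair your route by citing a genuine efficient-generation theorem such as Mohan Kumar's, or the $CH_0$-criterion of Murthy--Swan together with $CH_0(S)=0$, but that is a different and heavier input than the ones you name.)
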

\begin{proof}
Since the Picard group $\mathrm{Pic}(S)$ is trivial, every scheme
theoretic local complete intersection of pure codimension $1$ is
a global complete intersection. If $Z$ has pure codimension $2$,
hence pure dimension $0$, then its normal sheaf $\mathcal{N}_{Z/S}$
is locally free of rank $2$, hence isomorphic to the trivial sheaf
$\mathcal{O}_{Z}^{\oplus2}$. So $\Lambda^{2}\mathcal{N}_{Z/S}\simeq\mathcal{O}_{Z}$,
and it follows from the Serre construction \cite{Se61} (see also
\cite[Theorem 1.1]{Ar07}) that there exists an exact sequence $0\rightarrow\mathcal{O}_{S}\rightarrow\mathcal{E}\rightarrow\mathcal{I}_{Z}\rightarrow0$,
where $\mathcal{E}$ is locally free of rank $2$, hence free of rank
$2$, providing an isomorphism $Z\simeq V(f,g)$ for some regular
function $f,g\in\Gamma(S,\mathcal{O}_{S})$. 
\end{proof}
We also record the following result concerning automorphism groups
of smooth contractible surfaces:
\begin{prop}
\label{prop:Auto-Contract} The automorphism group of a contractible
surface $S$ different from $\mathbb{A}^{2}$ is either trivial if
$\kappa(S)=1$ or finite if $\kappa(S)=2$. 
\end{prop}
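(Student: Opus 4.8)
The plan is to treat the cases $\kappa(S)=2$ and $\kappa(S)=1$ separately, in each case exploiting the fact that a smooth contractible surface $S\neq\mathbb{A}^{2}$ carries a geometric structure canonically attached to it, on which $\mathrm{Aut}(S)$ is forced to act. Suppose first that $\kappa(S)=2$, so that $S$ is of logarithmic general type: for some, equivalently any, smooth completion $(\overline{S},D)$ by a reduced SNC boundary divisor $D$, the divisor $K_{\overline{S}}+D$ is big. Every automorphism of $S$ extends to a birational self-map of $\overline{S}$ preserving $D$, hence acts biregularly on the logarithmic canonical model of the pair $(\overline{S},D)$, which is canonically associated to $S$ and carries an ample log canonical divisor. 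A log canonical model of log general type admits no nontrivial logarithmic vector field, so the group of automorphisms preserving its boundary is finite; this is Iitaka's logarithmic analogue of Matsumura's finiteness theorem for varieties of general type (see \cite{MiyBook} and the references therein). Since $\mathrm{Aut}(S)$ embeds into this group, it is finite.

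Now suppose $\kappa(S)=1$. By the structure theory of smooth contractible surfaces of Kodaira dimension $1$ (see \cite[Chapter 3]{MiyBook} and the references therein), the Iitaka fibration of $S$ is a $\mathbb{C}^{*}$-fibration $p\colon S\rightarrow B$, canonically determined by $S$; hence $\mathrm{Aut}(S)$ acts on $p$, yielding a homomorphism $\rho\colon\mathrm{Aut}(S)\rightarrow\mathrm{Aut}(B)$ together with a compatible action on the base. Since $S$ is simply connected, so is $B$, whence $B\simeq\mathbb{A}^{1}$. Over the complement of its finitely many degenerate fibers, $p$ is a topologically locally trivial $\mathbb{C}^{*}$-bundle, so additivity of the Euler characteristic gives $1=\chi(S)=\sum_{b}\chi\bigl((p^{-1}(b))_{\mathrm{red}}\bigr)$, the sum running over the degenerate fibers; combined with $\kappa(S)=1$ and $H_{1}(S;\mathbb{Z})=0$, the classification forces $p$ to have a single degenerate fiber, a multiple fiber $d\cdot F_{0}$ with $F_{0}\simeq\mathbb{A}^{1}$ and $d\geq 2$. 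Its image $p(F_{0})\in\mathbb{A}^{1}$ is fixed by $\rho(\mathrm{Aut}(S))$, so after a translation of $\mathbb{A}^{1}$ the image of $\rho$ lies in the group of scalings.

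It then remains to see that $\mathrm{Aut}(S)$ is in fact trivial. First, $\rho$ has finite image: an automorphism inducing a scaling of infinite order on the base would, by a Zariski-closure argument, extend to a nontrivial algebraic $\mathbb{G}_{m}$-action on $S$, whose algebraic quotient would be a point and $S$ a weighted affine plane, contradicting $\kappa(S)=1$. Next, an element $\Phi$ of $\ker\rho$ restricts on the generic fiber $\mathrm{Spec}(\mathbb{C}(B)[u,u^{-1}])$ to $u\mapsto c\,u^{\pm 1}$ with $c\in\mathbb{C}(B)^{*}$; the two horizontal components of the boundary of the minimal completion of $p$ are distinguished by the way $F_{0}$ sits inside that completion, so they cannot be interchanged and the exponent must be $+1$, while the behaviour of $p$ along the multiple fiber $d\cdot F_{0}$ forces $c$ to be a $d$-th root of unity. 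Hence $\ker\rho$, and therefore $\mathrm{Aut}(S)$, is finite; finally, a nontrivial finite-order automorphism of $S$ would induce a nontrivial automorphism of the weighted dual graph of the boundary of the minimal completion of $p$, which the classification shows to be rigid. Therefore $\mathrm{Aut}(S)$ is trivial.

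The case $\kappa(S)=2$ is essentially a citation of Iitaka's theorem. The main obstacle is the case $\kappa(S)=1$: the whole argument depends on extracting from the classification of contractible surfaces of Kodaira dimension $1$ a description of the $\mathbb{C}^{*}$-fibration $p\colon S\rightarrow\mathbb{A}^{1}$ and of the boundary divisor of its minimal completion precise enough to exhibit the complete absence of symmetry — both that the degenerate fibers cannot be permuted by a base automorphism and that no nontrivial fiberwise automorphism exists. That rigidity analysis, rather than any single clean statement, is the technical heart of the proof.
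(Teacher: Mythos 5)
Your treatment of the case $\kappa(S)=2$ is essentially the paper's: finiteness is quoted from Iitaka's theorem that a variety of maximal logarithmic Kodaira dimension has finite automorphism group \cite{Ii77}, and nothing more is needed there. The divergence is in the case $\kappa(S)=1$: the paper disposes of it by citing Petrie's theorem \cite{Pe89} (which rests on the classification of \cite{tDPe90}), whereas you attempt to reprove triviality of $\mathrm{Aut}(S)$ directly, and your argument has genuine gaps. First, the structural input is not actually established: the Euler characteristic computation only shows that the degenerate fibers of the $\mathbb{C}^{*}$-fibration contain in total exactly one component isomorphic to $\mathbb{A}^{1}$; since components isomorphic to $\mathbb{C}^{*}$ contribute $0$ to $\chi$, it does not follow that there is a single degenerate fiber, nor that it is an irreducible multiple affine line --- at this point you are silently importing the full strength of the tom Dieck--Petrie classification, which is precisely the content one would otherwise cite.

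Second, and more seriously, the concluding step fails as stated. You argue that a nontrivial finite-order automorphism of $S$ ``would induce a nontrivial automorphism of the weighted dual graph of the boundary of the minimal completion,'' and that rigidity of this graph finishes the proof. But a nontrivial automorphism of a surface can perfectly well fix every boundary component of a completion setwise and hence act trivially on the dual graph (an involution acting on each boundary curve, or acting fiberwise on $S$ itself, does exactly this); so rigidity of the graph does not exclude nontrivial automorphisms. The intermediate claims --- that the two horizontal boundary components cannot be interchanged, that the fiberwise factor $c$ must be a root of unity, and that the Zariski closure of an infinite cyclic group of automorphisms is an algebraic $\mathbb{G}_{m}$ --- are likewise asserted rather than proved (the last is delicate, as $\mathrm{Aut}(S)$ is not a priori an algebraic group). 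You acknowledge yourself that the ``rigidity analysis'' is the technical heart and is not carried out; that analysis is exactly what \cite{Pe89} supplies, and without it the proposal does not constitute a proof of triviality in the $\kappa(S)=1$ case.
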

\begin{proof}
The case where $\kappa(S)=2$ follows from the more general fact that
any variety of maximal Kodaira dimension has finite automorphism group
(see e.g. \cite[Theorem 6]{Ii77}). For the case $\kappa(S)=1$, see
\cite{Pe89}. 
\end{proof}
\begin{rem}
It is established in \cite{Pe89} that certain smooth contractible
surfaces $S$ with $\kappa(S)=2$ such as the Ramanujam surface \cite{Ram71}
and the Gurjar-Miyanishi surfaces \cite{GuMi87} have a trivial automorphism
group. But there exists examples of smooth contractible surfaces of
log-general type with nontrivial finite automorphism groups (see e.g.
\cite{MS91,tD90}). 
\end{rem}

\subsection{A construction of affine $3$-spheres}

Let $S$ be a smooth contractible surface and let $Z=V(f,g)\subset S$
be a global scheme theoretic complete intersection whose support is
a closed point $p$ of $S$. By applying the construction described
in \S \ref{subsec:Bundle-Const} to the corresponding resolution
\[\xymatrix{ 0 \ar[r] & \mathcal{O}_S \ar[rr]^{^t(-g,f)}  & & \mathcal{O}_S^{\oplus 2} \ar[rr]^-{(f,g)} & & \mathcal{I}_Z \ar[r] & 0} \]of
the ideal $\mathcal{I}_{Z}\subset\mathcal{O}_{S}$ of $Z$, we obtain
a smooth affine threefold 
\[
V_{Z}\simeq\left\{ fV-gU=1\right\} \subset S\times\mathbb{A}^{2}
\]
equipped with the structure of a locally trivial $\mathbb{G}_{a}$-bundle
$\rho_{Z}=\mathrm{pr}_{S}:V_{Z}\rightarrow S\setminus Z=S\setminus\{p\}$
for the fixed point free $\mathbb{G}_{a,S\setminus\{p\}}$-action
defined by $t\cdot(s,U,V)=(s,U+f(s)t,V+g(s)t)$. 
\begin{example}
Since by Lemma \ref{lem:Contract-Complete-Inter} every closed point
$p$ on a smooth contractible surface $S$ is a global scheme theoretic
complete intersection $p=V(f,g)$ for some $f,g\in\Gamma(S,\mathcal{O}_{S})$,
we obtain in particular for every such point $p$ a smooth affine
threefold 
\[
\rho_{p}:V_{p}(S)\simeq\left\{ fV-gU=1\right\} \rightarrow S\setminus\{p\},
\]
whose isomorphy class as a scheme over $S\setminus\{p\}$ is independent
on the choice of the two generators of the ideal $I_{p}$ of $p$
in $\Gamma(S,\mathcal{O}_{S})$ (see Example \ref{exa:CompleteInter}). 
\end{example}
The variety $V_{o}(\mathbb{A}^{2})$ corresponding to the origin $o=(0,0)$
in $\mathbb{A}^{2}=\mathrm{Spec}(\mathbb{C}[x,y])$ is isomorphic
to $\mathrm{SL}_{2}(\mathbb{C})=\left\{ xV-yU=1\right\} $, hence
to the standard affine $3$-sphere $\mathbb{S}^{3}$. In particular,
it is diffeomorphic to $\mathbb{S}^{3}$. More generally, we have
the following result:
\begin{prop}
\label{prop:Diffeo-Sphere}For every pair $(S,Z)$ consisting of a
smooth contractible surface $S$ and a global scheme theoretic complete
intersection $Z\subset S$ supported at a closed point $p$ of $S$,
the smooth affine threefold $V_{Z}$ is diffeomorphic to the standard
affine $3$-sphere $\mathbb{S}^{3}=\left\{ x_{1}^{2}+x_{2}^{2}+x_{3}^{2}+x_{4}^{2}=1\right\} $
in $\mathbb{A}^{4}$. 
\end{prop}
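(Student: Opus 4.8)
The strategy is to forget the algebraic structure, pass to the $C^\infty$ category, and reduce to two classical topological facts: Stallings' characterisation of Euclidean spaces of dimension $\geq 5$, and the unknotting of submanifolds of codimension $\geq 3$. First I would use the $\mathbb{G}_{a}$-torsor structure $\rho_{Z}\colon V_{Z}\to S\setminus\{p\}$ supplied by Proposition~\ref{prop:Bundle-construction}. Since here the structural line bundle is $\mathcal{O}_{S}$, this exhibits $V_{Z}$, equipped with its euclidean topology, as a $C^\infty$ affine bundle with fibre $\mathbb{C}\cong\mathbb{R}^{2}$ over the paracompact manifold $S\setminus\{p\}$, modelled on the trivial real plane bundle. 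Averaging local sections with a partition of unity yields a global $C^\infty$ section, so this bundle is $C^\infty$-trivial and $V_{Z}$ is diffeomorphic to $(S\setminus\{p\})\times\mathbb{C}$, which is precisely the complement $(S\times\mathbb{C})\setminus\ell$ of the properly embedded affine line $\ell=\{p\}\times\mathbb{C}$. Applying the same reasoning to the pair $(\mathbb{A}^{2},\{o\})$, for which $V_{o}(\mathbb{A}^{2})\simeq\mathrm{SL}_{2}(\mathbb{C})=\mathbb{S}^{3}$, gives likewise $\mathbb{S}^{3}\cong_{C^\infty}\mathbb{C}^{3}\setminus\ell_{0}$ for a coordinate line $\ell_{0}\subset\mathbb{C}^{3}$. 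It thus suffices to prove that $(S\times\mathbb{C})\setminus\ell$ and $\mathbb{C}^{3}\setminus\ell_{0}$ are diffeomorphic.

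To this end I would first identify $S\times\mathbb{C}$ itself. It is a smooth contractible manifold of real dimension $6$, and I claim it is simply connected at infinity. Indeed, for a compact $K\subset S$ with $S\setminus K$ connected and a closed disc $D\subset\mathbb{C}$, the neighbourhood of infinity $(S\times\mathbb{C})\setminus(K\times D)$ is the union of $(S\setminus K)\times\mathbb{C}$ and $S\times(\mathbb{C}\setminus D)$ along the connected intersection $(S\setminus K)\times(\mathbb{C}\setminus D)$; since $S$ is simply connected, the van Kampen amalgam of $\pi_{1}(S\setminus K)$ and $\mathbb{Z}$ over $\pi_{1}(S\setminus K)\times\mathbb{Z}$ collapses to the trivial group. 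By Stallings' theorem, $S\times\mathbb{C}$ is therefore diffeomorphic to $\mathbb{C}^{3}=\mathbb{R}^{6}$. Under such a diffeomorphism $\ell$ is carried to a properly and smoothly embedded copy of $\mathbb{R}^{2}$ in $\mathbb{R}^{6}$; being of codimension $4\geq3$ it is unknotted, i.e. ambiently isotopic to the standard linear $\mathbb{R}^{2}\subset\mathbb{C}\times\mathbb{C}^{2}$, namely to $\ell_{0}$. Hence $(S\times\mathbb{C})\setminus\ell\cong_{C^\infty}\mathbb{C}^{3}\setminus\ell_{0}$, and combining this with the previous step yields $V_{Z}\cong_{C^\infty}\mathbb{S}^{3}$.

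The bundle triviality and the final bookkeeping are routine; the substantive inputs are the two topological theorems quoted, and the only genuine computation I expect to carry out is the simple connectedness at infinity of $S\times\mathbb{C}$. This is also where the main subtlety lies: for a contractible surface $S$ of log-general type the punctured surface $S\setminus\{p\}$ itself need not be diffeomorphic to $\mathbb{C}^{2}\setminus\{0\}$, because the end of $S$ may carry a non-trivial fundamental group, so one genuinely needs the extra $\mathbb{R}^{2}$-factor that $V_{Z}$ acquires by construction — it is exactly this stabilisation that forces the van Kampen amalgam above to be trivial. Making sure the stabilisation has the intended effect is thus the heart of the argument.
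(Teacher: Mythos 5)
Your argument is correct and follows the same skeleton as the paper's proof: smoothly trivialize the $\mathbb{G}_a$-bundle $\rho_Z$ (your partition-of-unity averaging is exactly the softness argument the paper uses), identify $V_Z$ with the complement of the fibre over $p$ in $S\times\mathbb{R}^2$, show $S\times\mathbb{R}^2\cong\mathbb{R}^6$, and unknot the properly embedded $\mathbb{R}^2$. The one place you genuinely diverge is the middle step: the paper simply cites McMillan--Zeeman for $S\times\mathbb{R}^2\cong\mathbb{R}^6$, whereas you rederive it from Stallings' characterization of $\mathbb{R}^n$ ($n\geq 5$) by checking simple connectivity at infinity via a van Kampen computation on the cover $(S\setminus K)\times\mathbb{C}\cup S\times(\mathbb{C}\setminus D)$; that computation is correct (both edge maps of the amalgam are the coordinate projections, so the pushout is trivial), and it makes explicit the point you rightly emphasize, namely that the end of $S$ may have nontrivial $\pi_1$ and the $\mathbb{R}^2$-stabilization is what kills it. One caveat on the last step: ``codimension $\geq 3$ implies unknotted'' is the PL criterion and is false in the smooth category (Haefliger's knotted $S^3\subset S^6$ is codimension $3$); the correct smooth statement is unknotting in the metastable range $2n\geq 3(k+1)$, which $(k,n)=(2,6)$ does satisfy, and for \emph{proper} embeddings of $\mathbb{R}^2$ in $\mathbb{R}^6$ one should argue as the paper does, via the fact that any two such embeddings are properly homotopic together with the proper-homotopy-implies-ambient-isotopy theorem in this range (Hirsch, Ch.~8). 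So your conclusion stands, but the unknotting step should be cited more carefully.
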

\begin{proof}
Since the sheaf $\mathcal{C}^{\infty}(S\setminus\{p\},\mathbb{C})$
of complex valued $\mathcal{C}^{\infty}$-functions on $S\setminus\{p\}$
is soft (see e.g. \cite[Theorem 5, p.25]{Gr79} ), every algebraic
\v{C}ech $1$-cocycle representing the class of the $\mathbb{G}_{a,S\setminus\{p\}}$-bundle
$\rho_{Z}:V_{Z}\rightarrow S\setminus Z=S\setminus\{p\}$ in $H^{1}(S\setminus\{p\},\mathcal{O}_{S\setminus\{p\}})$
on some Zariski open covering of $S\setminus\{p\}$ is a coboundary
when considered as a $1$-cocycle with values in $\mathcal{C}^{\infty}(S\setminus\{p\},\mathbb{C})$.
So $\rho_{Z}:V_{Z}\rightarrow S\setminus\{p\}$ is diffeomorphic to
the trivial $\mathbb{R}^{2}$-bundle over $S\setminus\{p\}$, and
hence $V_{Z}$ is diffeomorphic to the complement in the trivial $\mathbb{R}^{2}$-bundle
$\mathrm{pr}_{1}:S\times\mathbb{R}^{2}\rightarrow S$ of $\mathrm{pr}_{1}^{-1}(p)\simeq\mathbb{R}^{2}$.
Since $S$ is contractible, $S\times\mathbb{R}^{2}$ is diffeomorphic
to $\mathbb{R}^{6}$ by virtue of \cite{McMZee62}. The assertion
then follows from the fact that any two smooth closed embeddings of
$\mathbb{R}^{2}$ into $\mathbb{R}^{6}$ are properly homotopic, and
that in this range of dimensions any two properly homotopic smooth
closed embeddings are ambiently isotopic \cite[Chapter 8]{Hi76}. 
\end{proof}
In particular, we obtain the following:
\begin{cor}
\label{cor:Exotic-Sphere}For every pair $(S,Z)$ consisting of a
smooth contractible surface $S$ of non-negative Kodaira dimension
and a global scheme theoretic complete intersection $Z\subset S$
supported at a point, the corresponding threefold $V_{Z}$ is an exotic
affine $3$-sphere. 
\end{cor}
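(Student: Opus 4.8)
The plan is to combine the diffeomorphism statement of Proposition~\ref{prop:Diffeo-Sphere} with the rigidity statement of Proposition~\ref{prop:Isocrit}. First I would observe that by Proposition~\ref{prop:Diffeo-Sphere}, the threefold $V_{Z}$ attached to the pair $(S,Z)$ is diffeomorphic to $\mathbb{S}^{3}$, so to conclude that $V_{Z}$ is an \emph{exotic} affine $3$-sphere it only remains to show that $V_{Z}$ is not algebraically isomorphic to $\mathbb{S}^{3}$ itself.

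Next I would identify $\mathbb{S}^{3}\cong\mathrm{SL}_{2}(\mathbb{C})$ with the threefold $V_{o}(\mathbb{A}^{2})$ attached to the pair $(\mathbb{A}^{2},\{o\})$, where $o=(0,0)$ is the origin and $\{o\}=V(x,y)$ is viewed as a global scheme theoretic complete intersection in $\mathbb{A}^{2}$; this identification was already recorded in the paragraph preceding Proposition~\ref{prop:Diffeo-Sphere}. Now apply Proposition~\ref{prop:Isocrit} to the two pairs $(Y,Z)=(S,Z)$ and $(Y',Z')=(\mathbb{A}^{2},\{o\})$. Both $S$ and $\mathbb{A}^{2}$ are normal affine varieties, $Z$ and $\{o\}$ are global scheme theoretic complete intersections of pure codimension $2$, and the standing hypothesis that $S$ has non-negative Kodaira dimension guarantees that $\kappa(S\setminus Z)=\kappa(S\setminus\{p\})$ is non-negative as well, since removing a point from a surface does not decrease the log Kodaira dimension; this is exactly the hypothesis required by Proposition~\ref{prop:Isocrit}. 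Hence $V_{Z}\cong\mathbb{S}^{3}$ as abstract varieties if and only if the pairs $(S,Z)$ and $(\mathbb{A}^{2},\{o\})$ are isomorphic, which would force an isomorphism $S\cong\mathbb{A}^{2}$.

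Finally, such an isomorphism is impossible: by the classification of smooth contractible surfaces recalled at the start of Section~2, $\mathbb{A}^{2}$ is the unique smooth contractible surface of negative Kodaira dimension, so a smooth contractible surface $S$ with $\kappa(S)\geq 0$ cannot be isomorphic to $\mathbb{A}^{2}$. Therefore $(S,Z)\not\cong(\mathbb{A}^{2},\{o\})$, so $V_{Z}\not\cong\mathbb{S}^{3}$, and combined with the diffeomorphism we conclude that $V_{Z}$ is an exotic affine $3$-sphere.

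I do not anticipate a serious obstacle here, since the corollary is essentially a bookkeeping combination of the two preceding propositions; the only point requiring a word of care is the verification that $\kappa(S\setminus\{p\})\geq 0$, so that Proposition~\ref{prop:Isocrit} applies — but this follows immediately from the monotonicity of log Kodaira dimension under passing to open subsets, $\kappa(S\setminus\{p\})\geq\kappa(S)\geq 0$.
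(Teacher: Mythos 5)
Your proof is correct and follows exactly the paper's route: the paper's own proof is the one-line application of Proposition~\ref{prop:Isocrit} to the pairs $(S,Z)$ and $(\mathbb{A}^{2},\{(0,0)\})$, with the diffeomorphism to $\mathbb{S}^{3}$ supplied by the immediately preceding Proposition~\ref{prop:Diffeo-Sphere}. You have merely spelled out the details the paper leaves implicit (the identification $\mathbb{S}^{3}\simeq V_{o}(\mathbb{A}^{2})$, the invariance of the log Kodaira dimension under removal of the codimension-$2$ subset $\{p\}$, and the fact that $\kappa(\mathbb{A}^{2})=-\infty$ rules out $S\simeq\mathbb{A}^{2}$).
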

\begin{proof}
This follows from Proposition \ref{prop:Isocrit} applied to the pairs
$(Y,Z)=(S,Z)$ and $(Y',Z')=(\mathbb{A}^{2},\{(0,0\})$. 
\end{proof}
\begin{cor}
Let $S$ and $S'$ be non-isomorphic smooth contractible surface of
non-negative Kodaira dimension. Then for every pair of closed points
$p\in S$ and $p'\in S'$, the threefolds $V_{p}(S)$ and $V_{p'}(S')$
are non-isomorphic exotic affine $3$-spheres. 
\end{cor}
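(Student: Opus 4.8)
The plan is to derive the statement directly from Corollary \ref{cor:Exotic-Sphere} together with Proposition \ref{prop:Isocrit}. First I would recall that, by Lemma \ref{lem:Contract-Complete-Inter}, each of the closed points $p\in S$ and $p'\in S'$ is automatically a scheme theoretic global complete intersection, so that the threefolds $V_{p}(S)$ and $V_{p'}(S')$ are well defined by the construction of \S\ref{subsec:Bundle-Const} applied to the pairs $(S,p)$ and $(S',p')$. By Proposition \ref{prop:Diffeo-Sphere} both are diffeomorphic to $\mathbb{S}^{3}$, and by Corollary \ref{cor:Exotic-Sphere} each of them is an exotic affine $3$-sphere since $\kappa(S),\kappa(S')\geq 0$. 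So the only genuine content is the non-isomorphy of $V_{p}(S)$ and $V_{p'}(S')$ as abstract varieties.

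For that, I would apply Proposition \ref{prop:Isocrit} with $(Y,Z)=(S,p)$ and $(Y',Z')=(S',p')$. Its hypotheses are met: $S$ and $S'$ are normal affine varieties (smooth contractible surfaces are affine), $p$ and $p'$ are global complete intersections of pure codimension $2$, and $\kappa(S\setminus\{p\})\geq 0$ — this last point because a smooth contractible surface $S\neq\mathbb{A}^{2}$ has $\kappa(S)\geq 0$, and removing a point from an affine surface does not decrease the logarithmic Kodaira dimension, so $\kappa(S\setminus\{p\})\geq\kappa(S)\geq 0$ (alternatively one invokes directly that $\kappa(S\setminus\{p\})=\kappa(S)$ for a smooth affine surface, since blowing up the point and the semicontinuity/birational invariance of $\kappa$ under the relevant open immersions give equality). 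Proposition \ref{prop:Isocrit} then says $V_{p}(S)\simeq V_{p'}(S')$ if and only if the pairs $(S,p)$ and $(S',p')$ are isomorphic. But any isomorphism of pairs $(S,p)\to(S',p')$ is in particular an isomorphism $S\to S'$, which is excluded by hypothesis. Hence $V_{p}(S)$ and $V_{p'}(S')$ are non-isomorphic, completing the proof.

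The main subtlety — though it is essentially bookkeeping — is the Kodaira dimension input: one must be sure that $S\setminus\{p\}$, rather than $S$ itself, has non-negative $\kappa$, since that is what Proposition \ref{prop:Isocrit} requires. I would handle this by recording, as a one-line observation, that for a smooth affine surface $S$ and a closed point $p$ one has $\kappa(S\setminus\{p\})\geq\kappa(S)$; this is standard (it follows from the fact that a log resolution of $(\overline{S},\overline{S}\setminus(S\setminus\{p\}))$ dominates one of $(\overline{S},\overline{S}\setminus S)$ after blowing up $p$, and effective boundary divisors only grow), and combined with $\kappa(S)\geq 0$ for contractible $S\neq\mathbb{A}^{2}$ it gives what is needed. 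Everything else is a direct citation of the results already proved in this section, so no further obstacle arises.
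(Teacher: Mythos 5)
Your proof is correct and follows exactly the paper's route: the paper's proof is the one-line "This follows again from Proposition \ref{prop:Isocrit} applied to the pairs $(S,p)$ and $(S',p')$." Your additional verification that $\kappa(S\setminus\{p\})\geq\kappa(S)\geq 0$ is a sound (and welcome) filling-in of a hypothesis the paper leaves implicit, but it does not change the approach.
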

\begin{proof}
This follows again from Proposition \ref{prop:Isocrit} applied to
the pairs $(S,p)$ and $(S',p')$. 
\end{proof}
\noindent Applying Proposition \ref{prop:Isocrit} to pairs of closed
points on a fixed smooth contractible surface, we obtain:
\begin{cor}
\label{cor:Contract-Iso-Crit}Let $S$ be a smooth contractible surface
of non-negative Kodaira dimension and let $p,p'$ be closed point
on $S$. Then $V_{p}(S)$ and $V_{p'}(S)$ are isomorphic as abstract
algebraic varieties if and only if $p$ and $p'$ belong to the same
orbit of the action of $\mathrm{Aut}(S)$ on $S$. 
\end{cor}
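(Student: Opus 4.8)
The plan is to deduce the statement directly from Proposition \ref{prop:Isocrit}, applied to the pairs $(Y,Z)=(S,\{p\})$ and $(Y',Z')=(S,\{p'\})$, where $p$ and $p'$ are given their reduced scheme structures. First I would record that, by Lemma \ref{lem:Contract-Complete-Inter}, the reduced closed points $\{p\}$ and $\{p'\}$ are scheme-theoretic global complete intersections of pure codimension $2$ in the normal affine surface $S$, so that $V_p(S)\simeq V_{\{p\}}$ and $V_{p'}(S)\simeq V_{\{p'\}}$ are precisely varieties of the type considered in Proposition \ref{prop:Isocrit}, with isomorphy classes over $S\setminus\{p\}$, resp.\ $S\setminus\{p'\}$, that do not depend on the chosen generators of the corresponding maximal ideals (Example \ref{exa:CompleteInter}).

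Next I would check that the Kodaira dimension hypothesis of Proposition \ref{prop:Isocrit} is satisfied, namely that $\kappa(S\setminus\{p\})\geq 0$. Since $S$ has non-negative Kodaira dimension and $\{p\}$ has codimension $2$ in the smooth surface $S$, deleting it leaves the log Kodaira dimension unchanged: choose a smooth completion $\bar{S}\supset S$ whose boundary $D=\bar{S}\setminus S$ is a simple normal crossing divisor (so that $p$ lies in the interior, hence away from $D$) and blow up $p$; then $(\bar{S}',D+E)$, with $E$ the exceptional $(-1)$-curve, is a smooth completion of $S\setminus\{p\}$ and $K_{\bar{S}'}+D+E=\pi^{*}(K_{\bar{S}}+D)+2E$. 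Because $\pi_{*}\mathcal{O}_{\bar{S}'}(mE)=\mathcal{O}_{\bar{S}}$ for all $m\geq 0$, the projection formula gives $h^{0}(\bar{S}',m(K_{\bar{S}'}+D+E))=h^{0}(\bar{S},m(K_{\bar{S}}+D))$ for every $m\geq 0$, whence $\kappa(S\setminus\{p\})=\kappa(S)\geq 0$.

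With these two points in place, Proposition \ref{prop:Isocrit} applies and yields that $V_p(S)$ and $V_{p'}(S)$ are isomorphic as abstract varieties if and only if the pairs $(S,\{p\})$ and $(S,\{p'\})$ are isomorphic; and an isomorphism of such pairs is exactly an automorphism $\varphi\in\mathrm{Aut}(S)$ with $\varphi(p)=p'$, that is, a witness that $p$ and $p'$ lie in the same orbit of the action of $\mathrm{Aut}(S)$ on $S$. (Conversely, any such $\varphi$ produces an isomorphism of pairs, hence an isomorphism $V_p(S)\simeq V_{p'}(S)$ already by the easy half of Proposition \ref{prop:Isocrit}, which needs no hypothesis on $\kappa$.) I expect no real obstacle here: the content is entirely carried by Proposition \ref{prop:Isocrit} and Lemma \ref{lem:Contract-Complete-Inter}, and the only mildly technical ingredient is the insensitivity of the log Kodaira dimension to the removal of a subset of codimension $\geq 2$, which is standard; granting it, the argument reduces to a one-line citation of Proposition \ref{prop:Isocrit}.
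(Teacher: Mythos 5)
Your proposal is correct and follows exactly the paper's route: the paper derives this corollary by directly applying Proposition \ref{prop:Isocrit} to the pairs $(S,p)$ and $(S,p')$, which is precisely your argument. Your additional verifications (that a closed point is a global complete intersection via Lemma \ref{lem:Contract-Complete-Inter}, and that $\kappa(S\setminus\{p\})=\kappa(S)$ since removing a codimension-$2$ subset does not change the log Kodaira dimension) are correct and merely make explicit the hypotheses the paper leaves implicit.
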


\subsection{Algebraic families of exotic affine $3$-spheres}

Given a smooth contractible surface $S$, we let $\Delta\subset S\times S$
be the diagonal, we let $T=S\times S\setminus\Delta$ and we denote
by $\pi:T\rightarrow S$ the restriction of the first projection.
Since the normal sheaf $\mathcal{N}_{\Delta/S\times S}\simeq\mathcal{T}_{S}$
of $\Delta\simeq S$ in $S\times S$ is trivial, it follows from the
Serre construction \cite[Theorem 1.1]{Ar07} that the ideal sheaf
$\mathcal{I}_{\Delta}$ admits a locally free resolution of the form
\begin{eqnarray} \label{eq:Second-Seq}\xymatrix{0 \ar[r] & \mathcal{O}_{S\times S} \ar[r] & \mathcal{F} \ar[r] & \mathcal{I}_{\Delta} \ar[r] & 0}\end{eqnarray}for
some locally free sheaf $\mathcal{F}$ of rank $2$ on $S\times S$,
uniquely determined up to isomorphism. Furthermore, the above extension
can be chosen in such a way that its class in $\mathrm{Ext}_{S\times S}^{1}(\mathcal{I}_{\Delta},\mathcal{O}_{S\times S})$
coincides via the isomorphism $\mathrm{Ext}_{S\times S}^{1}(\mathcal{I}_{\Delta},\mathcal{O}_{S\times S})\simeq H^{0}(S\times S,\det\mathcal{N}_{\Delta/S\times S})$
to the constant global section $1$ of $\det\mathcal{N}_{\Delta/S\times S}\simeq\mathcal{O}_{S}$.
We let $\rho_{\Delta}:\mathcal{V}_{\Delta}\rightarrow T$ be the $\mathbb{G}_{a}$-bundle
over $T$ corresponding to this locally free resolution by the construction
of $\S$\ref{subsec:Bundle-Const} and we let $h=\pi\circ\rho_{\Delta}:\mathcal{V}_{\Delta}\rightarrow S$
\[\xymatrix{ \mathcal{V}_{\Delta} \ar[d]_{\rho_{\Delta}} \ar[dr]^{h} & \\ T \ar[r]^{\pi} & S.}\]  
\begin{prop}
\label{prop:V-Delta}With the notation above, the following hold: 

a) $\mathcal{V}_{\Delta}$ is a smooth affine variety of dimension
$5$.

b) The morphism $h=\pi\circ\rho_{\Delta}:\mathcal{V}_{\Delta}\rightarrow S$
is smooth and surjective.

c) For every closed point $j:\{p\}\hookrightarrow S$, we have commutative
diagram with cartesian squares \[\xymatrix{ V_p(S) \ar[r] \ar[d]_{\rho_p} & V_{\Delta} \ar[d]^{\rho_{\Delta}} \\ S\setminus\!\{p\} \ar[r] \ar[d]_{\rho_p} & T \ar[d]^{\pi} \\ \{p\} \ar[r]^{j} & S.}\]
\end{prop}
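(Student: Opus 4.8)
The plan is to read off parts (a) and (b) from the construction of \S\ref{subsec:Bundle-Const} together with the standard stability of smoothness and affineness under composition and open immersion, and to deduce part (c) from the compatibility of that construction with base change along the closed immersions $\{p\}\times S\hookrightarrow S\times S$, $p$ a closed point of $S$.

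For (a): by Proposition \ref{prop:Bundle-construction}, $\rho_{\Delta}\colon\mathcal{V}_{\Delta}\rightarrow T$ is a Zariski-locally trivial $\mathbb{A}^{1}$-bundle, and $\mathcal{V}_{\Delta}=X\setminus H$ is affine over $Y=S\times S$ because $H$ is relatively ample. Since $S$ is affine, so is $S\times S$, whence $\mathcal{V}_{\Delta}$ is affine; and since $T$ is a nonempty open subset of the smooth irreducible fourfold $S\times S$, the total space of a locally trivial $\mathbb{A}^{1}$-bundle over $T$ is a smooth irreducible variety of dimension $\dim T+1=5$. For (b): $\pi\colon T\rightarrow S$ is the restriction to an open subset of the first projection $\mathrm{pr}_{1}\colon S\times S\rightarrow S$, which is smooth as a base change of $S\rightarrow\mathrm{Spec}(\mathbb{C})$; and $\rho_{\Delta}$ is smooth because it is a locally trivial $\mathbb{A}^{1}$-bundle. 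Hence $h=\pi\circ\rho_{\Delta}$ is smooth. Surjectivity will follow once (c) is established, since then $h^{-1}(p)\simeq V_{p}(S)$ is a nonempty threefold for every closed point $p$.

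For (c): the lower square is essentially formal. As $\pi=\mathrm{pr}_{1}|_{T}$ and $\Delta\cap(\{p\}\times S)=\{(p,p)\}$, we get $\{p\}\times_{S}T=(\{p\}\times S)\setminus\Delta\simeq S\setminus\{p\}$, the induced map to $S$ being the structure morphism of $S\setminus\{p\}$; this is the lower cartesian square. For the upper one, the point is to identify the restriction of the bundle $\rho_{\Delta}$ over $S\setminus\{p\}\subset T$ with $\rho_{p}\colon V_{p}(S)\rightarrow S\setminus\{p\}$. First I would check that $\Delta$ and $\{p\}\times S$ meet transversally at $(p,p)$: both are smooth of codimension $2$ in $S\times S$, and their tangent spaces there, the ``diagonal'' $\{(v,v):v\in T_{p}S\}$ and $\{0\}\times T_{p}S$, span $T_{(p,p)}(S\times S)$. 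Hence $\mathcal{O}_{\Delta}$ and $\mathcal{O}_{\{p\}\times S}$ are $\mathrm{Tor}$-independent over $\mathcal{O}_{S\times S}$ and the scheme-theoretic intersection is the reduced point $p$ under $\{p\}\times S\simeq S$, so restricting the resolution (\ref{eq:Second-Seq}) along $\{p\}\times S\hookrightarrow S\times S$ yields a short exact sequence $0\rightarrow\mathcal{O}_{S}\rightarrow\mathcal{F}|_{\{p\}\times S}\rightarrow\mathcal{I}_{\{p\}}\rightarrow0$. Since $S$ is smooth contractible, $\mathcal{F}|_{\{p\}\times S}$ is free of rank $2$ (cf. the discussion preceding Lemma \ref{lem:Contract-Complete-Inter}), so this is precisely a resolution of the kind used in \S\ref{subsec:Bundle-Const} to build $\rho_{p}\colon V_{p}(S)\rightarrow S\setminus\{p\}$. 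Because forming $\mathbb{P}(\mathcal{F})$, the subscheme $H$, and $V_{Z}=X\setminus H$ all commute with the base change $\{p\}\times S\hookrightarrow S\times S$, the restriction of $\mathcal{V}_{\Delta}$ to $\{p\}\times S$ is the threefold attached to the restricted resolution, hence coincides over $S\setminus\{p\}$ with $V_{p}(S)$; by Example \ref{exa:CompleteInter} this does not depend on the chosen generators of $\mathcal{I}_{\{p\}}$. Stacking the two cartesian squares then gives the diagram of (c), and with it the surjectivity of $h$ claimed in (b).

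The step I expect to require the most care is the base-change compatibility in (c): one has to verify both that $(-)\otimes_{\mathcal{O}_{S\times S}}\mathcal{O}_{\{p\}\times S}$ preserves exactness of (\ref{eq:Second-Seq}) --- equivalently that $\mathrm{Tor}_{1}^{\mathcal{O}_{S\times S}}(\mathcal{I}_{\Delta},\mathcal{O}_{\{p\}\times S})=0$, which is exactly where the transversality of $\Delta$ and $\{p\}\times S$ at $(p,p)$ enters --- and that the $\mathrm{Proj}$-and-complement recipe of \S\ref{subsec:Bundle-Const} is itself stable under this base change, so that the restricted bundle is genuinely, and not merely abstractly, the one attached to the restricted resolution.
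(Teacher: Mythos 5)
Your proof is correct and follows essentially the same route as the paper's: parts (a) and (b) are read off from Proposition \ref{prop:Bundle-construction} together with the smoothness and surjectivity of $\rho_{\Delta}$ and $\pi$, and part (c) comes from restricting the resolution (\ref{eq:Second-Seq}) to the fiber $\{p\}\times S$ and observing that the construction of \S\ref{subsec:Bundle-Const} commutes with this base change. The only difference is that you explicitly justify, via the transversality of $\Delta$ and $\{p\}\times S$ at $(p,p)$ and the resulting $\mathrm{Tor}$-vanishing, the exactness of the restricted sequence and the identification $\mathcal{I}_{\Delta}|_{\{p\}\times S}\simeq\mathcal{I}_{p}$, a point the paper asserts without comment.
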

\begin{proof}
Since $\mathcal{V}_{\Delta}$ is affine over $S\times S$ by Proposition
\ref{prop:Bundle-construction} and $S\times S$ is affine, $\mathcal{V}_{\Delta}$
is an affine variety. The smoothness and the surjectivity of $h$
follow from those of $\rho_{\Delta}:\mathcal{V}_{\Delta}\rightarrow T$
and $\pi:T\rightarrow S$. By construction, the scheme theoretic fiber
$T_{p}$ of $\pi$ over a closed point $p\in S$ is isomorphic to
$S\setminus\{p\}$ while the restriction of the exact sequence \ref{eq:Second-Seq}
to the fiber $(S\times S)_{p}\simeq S$ of $\mathrm{pr}_{1}:S\times S\rightarrow S$
over $p$ is a free resolution \[\xymatrix{0 \ar[r] & \mathcal{O}_S \ar[r] & \mathcal{O}_S^{\oplus 2} \ar[r] & \mathcal{I}_p \ar[r] & 0}\] of
the ideal sheaf $\mathcal{I}_{p}\subset\mathcal{O}_{S}$ of $p$.
This implies in turn that $\mathcal{V}_{\Delta,p}\simeq V_{p}(S)$. 
\end{proof}
\begin{rem}
When $S$ is different from $\mathbb{A}^{2}$, it seems to be unknown
whether the middle locally free sheaf $\mathcal{F}$ occurring in
the exact sequence \ref{eq:Second-Seq} is free or not. If not free,
then $\mathcal{F}$ would provide a counter-example to the Generalized
Serre Problem on projective modules on topologically contractible
affine varieties, and $S\times S$ would not be contractible in the
unstable $\mathbb{A}^{1}$-homotopy category of Morel-Voevodsky. 
\end{rem}
For $S=\mathbb{A}^{2}=\mathrm{Spec}(\mathbb{C}[x_{1},y_{1}])$, the
ideal $I_{\Delta}$ of the diagonal in $\mathbb{A}^{2}\times\mathbb{A}^{2}=\mathrm{Spec}(\mathbb{C}[x_{1},y_{1}][x_{2},y_{2}])$
is generated by the polynomials $x_{2}-x_{1}$ and $y_{2}-y_{1}$,
and the corresponding variety $\rho_{\Delta}:\mathcal{V}_{\Delta}\rightarrow\mathbb{A}^{2}\times\mathbb{A}^{2}\setminus\Delta$
is thus isomorphic to the subvariety $\mathcal{W}$ of $\mathbb{A}^{2}\times\mathbb{A}^{2}\times\mathrm{Spec}(\mathbb{C}[U,V])$
defined by the equation $(x_{2}-x_{1})V-(y_{2}-y_{1})U=1$, equipped
with the restriction of the projection onto $\mathbb{A}^{2}\times\mathbb{A}^{2}$.
It follows in turn that the family $h:\mathcal{V}_{\Delta}\rightarrow\mathbb{A}^{2}$
is isomorphic to the trivial one $\mathbb{A}^{2}\times\mathrm{SL}_{2}(\mathbb{C})$
via the morphism 
\[
\Phi:\mathcal{W}\rightarrow\mathbb{A}^{2}\times\mathrm{SL}_{2}(\mathbb{C}),\,(x_{1},y_{1},x_{2},y_{2},U,V)\mapsto((x_{1},y_{1}),(x_{2}-x_{1}),(y_{2}-y_{1}),U,V).
\]
In contrast, when $S$ has non-negative Kodaira dimension, it follows
from Corollary \ref{cor:Contract-Iso-Crit} that the fibers $V_{p}(S)$
and $V_{p'}(S)$ of $h:\mathcal{V}_{\Delta}\rightarrow S$ over two
closed point $p$ and $p'$ of $S$ are isomorphic if and only if
$p$ and $p'$ belong to the same orbit of the action of $\mathrm{Aut}(S)$
on $S$. Since there does not exist any smooth contractible surface
of Kodaira dimension $0$, we have $\kappa(S)\geq1$ necessarily,
and since by Proposition \ref{prop:Auto-Contract} the action of $\mathrm{Aut}(S)$
on $S$ is not transitive, $h:\mathcal{V}_{\Delta}\rightarrow S$
is not isomorphic to the trivial family $S\times\mathrm{SL}_{2}(\mathbb{C})$.
In fact, as a consequence of Proposition \ref{prop:V-Delta} c) and
Corollary \ref{cor:Exotic-Sphere}, we have the following stronger
result:
\begin{thm}
Let $S$ be a smooth contractible surface of non-negative Kodaira
dimension with trivial automorphism group. Then $h:\mathcal{V}_{\Delta}\rightarrow S$
is a smooth family of pairwise non isomorphic exotic affine $3$-spheres. 
\end{thm}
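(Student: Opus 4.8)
The plan is to assemble the results established in the preceding sections; the statement is essentially a corollary of the machinery of \S\ref{subsec:Bundle-Const}--\S2.2. First I would record that $h=\pi\circ\rho_{\Delta}:\mathcal{V}_{\Delta}\rightarrow S$ is a smooth family: by Proposition \ref{prop:V-Delta}(a)--(b), $\mathcal{V}_{\Delta}$ is a smooth affine variety of dimension $5$ and $h$ is smooth and surjective. By Proposition \ref{prop:V-Delta}(c), the scheme-theoretic fiber of $h$ over an arbitrary closed point $p\in S$ is isomorphic to the threefold $V_{p}(S)$ attached, via the construction of \S\ref{subsec:Bundle-Const}, to the global scheme-theoretic complete intersection structure on $\{p\}\subset S$ furnished by Lemma \ref{lem:Contract-Complete-Inter}.

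Next I would establish that each closed fiber is an exotic affine $3$-sphere. Since $S$ is a smooth contractible surface of non-negative Kodaira dimension and $\{p\}\subset S$ is a global scheme-theoretic complete intersection supported at a point, Proposition \ref{prop:Diffeo-Sphere} gives that $V_{p}(S)$ is diffeomorphic to $\mathbb{S}^{3}$, while Corollary \ref{cor:Exotic-Sphere} gives that it is not algebraically isomorphic to $\mathbb{S}^{3}=V_{o}(\mathbb{A}^{2})$. Thus every closed fiber of $h$ is an exotic affine $3$-sphere.

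Finally, to obtain pairwise non-isomorphy of distinct closed fibers I would apply Corollary \ref{cor:Contract-Iso-Crit} to the surface $S$: for closed points $p,p'\in S$, the fibers $V_{p}(S)$ and $V_{p'}(S)$ are isomorphic as abstract algebraic varieties if and only if $p$ and $p'$ belong to the same orbit of the action of $\mathrm{Aut}(S)$ on $S$. Since $\mathrm{Aut}(S)$ is trivial by hypothesis, each orbit is a single point, so $V_{p}(S)\simeq V_{p'}(S)$ forces $p=p'$. Combining the three steps proves the theorem. One may also note in passing that, since there is no smooth contractible surface of Kodaira dimension $0$, the hypothesis $\kappa(S)\geq 0$ means $\kappa(S)\in\{1,2\}$, and by Proposition \ref{prop:Auto-Contract} the triviality assumption on $\mathrm{Aut}(S)$ is automatic when $\kappa(S)=1$, so it is only a genuine restriction in the log-general type case.

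There is no hard step here in the sense of a new argument being needed. The only points requiring care are already dealt with in the cited statements: namely, that $S\setminus\{p\}$ inherits non-negative Kodaira dimension from $S$ (removing a point, a closed subset of codimension $2$, does not lower the log Kodaira dimension), which is what licenses the use of Proposition \ref{prop:Isocrit} inside Corollaries \ref{cor:Exotic-Sphere} and \ref{cor:Contract-Iso-Crit}; and the fact that the identification of the fibers of $h$ with the varieties $V_{p}(S)$ in Proposition \ref{prop:V-Delta}(c) is an identification of abstract varieties, so that Corollary \ref{cor:Contract-Iso-Crit} is indeed the relevant comparison.
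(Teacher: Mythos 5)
Your proposal is correct and follows exactly the route the paper takes: it deduces the theorem from Proposition \ref{prop:V-Delta} (smoothness of $h$ and identification of the fibers with the $V_{p}(S)$), Corollary \ref{cor:Exotic-Sphere} (each fiber is an exotic affine $3$-sphere), and Corollary \ref{cor:Contract-Iso-Crit} combined with the triviality of $\mathrm{Aut}(S)$ (pairwise non-isomorphy). The additional remarks you make about $\kappa(S)\in\{1,2\}$ and the codimension-$2$ removal are accurate and consistent with the paper's discussion.
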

\bibliographystyle{amsplain}

\end{document}